\newcommand{\C} {\mathbb{C}}
\newcommand{\Q} {\mathbb{Q}}
\newcommand{\F}{\mathbb{F}}
\newcommand{\Z}{\mathbb{Z}}
\newcommand{\PP}{\mathbb{P}}
\newcommand{\NS}{\mathop{\rm NS}}
\newcommand{\disc}{\mathop{\rm disc}}
\newcommand{\mX}{\mathcal X}
\newcommand{\mZ}{\mathcal Z}
\newcommand{\KK}{k}
\newtheorem{Theorem}{Theorem}[section]
\newtheorem{Proposition}[Theorem]{Proposition}
\newtheorem{Lemma}[Theorem]{Lemma}
\newtheorem{Corollary}[Theorem]{Corollary}
\newtheorem{Assumption}[Theorem]{Assumption}
\theoremstyle{remark}
\newtheorem{Remark}[Theorem]{Remark}
\newtheorem{Example}[Theorem]{Example}
\newtheorem{Convention}[Theorem]{Convention}
\newtheorem{Conclusion}[Theorem]{Conclusion}
\theoremstyle{definition}
\newtheorem{Definition}[Theorem]{Definition}
\begin{document}

\title{At most 64 lines on smooth quartic surfaces (characteristic 2)}


\author{S\l awomir Rams}
\thanks{Funding   by ERC StG~279723 (SURFARI) is gratefully acknowledged (M.~Sch\"utt). Partially supported by National Science Centre, Poland,  grant 2014/15/B/ST1/02197 (S. Rams).}
\address{Institute of Mathematics, Jagiellonian University, 
ul. {\L}ojasiewicza 6,  30-348 Krak\'ow, Poland}
\email{slawomir.rams@uj.edu.pl}

\author{Matthias Sch\"utt}
\address{Institut f\"ur Algebraische Geometrie, Leibniz Universit\"at
  Hannover, Welfengarten 1, 30167 Hannover, Germany}
  \address{Riemann Center for Geometry and Physics, Leibniz Universit\"at
  Hannover, Appelstrasse 2, 30167 Hannover, Germany}
\email{schuett@math.uni-hannover.de}

\keywords{K3 surface, quartic surface, elliptic fibration, line}

%
\date{March  15, 2017}

\begin{abstract}
Let $\KK$ be a field of characteristic $2$.
 We give a geometric proof
 that there are no smooth quartic surfaces $S \subset \PP^3_k$ 
 with more than 64 lines (predating work of Degtyarev which improves this bound to 60). 
 We also exhibit a smooth quartic containing 60 lines
 which thus attains the record in characteristic $2$.
\end{abstract}
%
%
 \maketitle

 \section{Introduction}
 \label{s:intro}
 
 This paper continues our study of the maximum number of lines
 on smooth quartic surfaces in $\PP^3$
 initiated in \cite{RS} and \cite{RS14}.
 Starting from Segre's original ideas and claims in \cite{Segre},
 we proved in \cite{RS}
 that a smooth quartic surface outside characteristics $2$ and $3$
 contains at most 64 lines,
 with the maximum attained by Schur's quartic \cite{schur}.
 In characteristic $3$, this specializes to the Fermat quartic
 which contains 112 lines, the maximum by \cite{RS14}.
 In characteristic $2$, however, both these quartics degenerate
 which opens the way to new phenomena.
In this paper we study these phenomena and give a  geometric proof
that the maximum number of lines still cannot exceed 64:

%
%
%
%
%
\begin{Theorem}
\label{thm}
Let $\KK$ be a field of characteristic $p=2$.
Then any 
smooth quartic surface over $k$ contains at most 64 lines.
\end{Theorem}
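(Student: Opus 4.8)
The plan is to reduce the global count to the study of the genus-one pencils cut out by the individual lines, adapting the strategy of \cite{RS} to the phenomena peculiar to characteristic $2$. Fix a line $\ell \subset S$. As $S$ is a smooth quartic it is a $\Kthree$ surface, so $\ell^2 = -2$, while the hyperplane class $h$ satisfies $h^2 = 4$ and $\ell \cdot h = 1$. The residual cubics, i.e.\ the members of the pencil $|h-\ell|$, satisfy $(h-\ell)^2 = 0$ and $p_a(h-\ell) = 1$, and the pencil has no base points; hence projection away from $\ell$ is a genus-one fibration $\pi_\ell \colon S \to \PP^1$ whose fibres are these cubics, with $\ell$ a trisection. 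For any further line $m$ one has $m\cdot(h-\ell) = 1 - m\cdot\ell$, so $m$ is a section of $\pi_\ell$ when it is skew to $\ell$ and a fibre component when it meets $\ell$. This partitions the remaining lines on $S$ into the \emph{vertical} ones (those meeting $\ell$) and the \emph{horizontal} ones (the sections).

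Assume first that $\pi_\ell$ is genuinely elliptic. I would bound the number of vertical lines through the Euler-number identity $e(S) = 24 = \sum_v e(F_v)$ valid for a $\Kthree$ surface: since the trisection $\ell$ meets every fibre in three points, a reducible fibre can contain only boundedly many components that are lines, and these must be placed compatibly with $\ell$. Running through the Kodaira--Tate list against the budget $24$ then reproduces the explicit bound on vertical lines of \cite{RS}. In characteristic $2$ the additive fibres may carry wild ramification, which only enlarges the contributions $e(F_v)$ and so tightens the constraint.

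The genuinely new feature is that $\pi_\ell$ may be \emph{quasi-elliptic}, every fibre being a cuspidal cubic whose cusps sweep out a smooth rational curve and whose reducible fibres are forced into the additive types $\td_n$, $\te_n$. I would treat this case on its own: the cuspidal curve and the very restricted list of fibre types sharply limit both the vertical lines and the sections that can be lines. In parallel the dichotomy between lines of the first and the second kind must be redone, a line being of the second kind when the tangent-plane map $\ell \to \PP^1$, $P \mapsto T_P S$, degenerates. In characteristic $2$ this Gauss map can be \emph{inseparable}, so the characteristic-zero enumeration of tangent planes collapses and has to be replaced. I expect the combination of the quasi-elliptic pencils with this reanalysis of second-kind lines to be the main obstacle, as it is precisely where the characteristic-$2$ degenerations of Schur's and Fermat's quartics concentrate.

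Finally I would assemble the local estimates: for each $\ell$ a bound on its vertical lines from the two cases above, together with a bound on how many sections of $\pi_\ell$ can themselves be lines (via disjointness of sections and the $\MW$ lattice in the elliptic case, and the cuspidal-curve constraint in the quasi-elliptic one). Double counting the incident pairs of lines as $\ell$ ranges over all lines of $S$, and feeding in that lines of the second kind are few and specially situated, then yields the uniform bound of $64$.
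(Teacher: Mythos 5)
Your setup (the pencil $|h-\ell|$, the trisection $\ell$, vertical versus horizontal lines) matches the paper's, but the two load-bearing steps of your plan fail. First, the quasi-elliptic case cannot be ``treated on its own'' by bounding lines against the cusp curve: the paper proves (Proposition \ref{prop:qe}) that a quasi-elliptic $\pi$ simply never arises from a line on a smooth quartic. The mechanism is arithmetic, not geometric: $S$ would be supersingular with $\disc\NS(S)=-2^{2\sigma}$, so the trisection $\ell$ forces a section (else $\langle\NS(S),F/3\rangle$ would be an integral overlattice of index $3$, making $\disc$ non-integral); then the section $P$ induced by $\ell$ has Mordell--Weil height $h(P)=10+6s-\tfrac32 r$, which cannot vanish modulo $3$, contradicting finiteness of $\MW$ for jacobian quasi-elliptic fibrations. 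Nothing in your sketch substitutes for this, and your fibre list is also wrong: the fibres are plane cubics, so types $\td_n,\te_n$ cannot occur; a quasi-elliptic $\pi$ would have generic fibre $II$ and exactly $20$ fibres of type $III$. Second, the Euler-number budget does \emph{not} reproduce the vertical bound of \cite{RS}: eight $I_3$ fibres fit inside $e(S)=24$ and would give $24$ vertical lines. The bound $v(\ell)\le 18$ for lines of the first kind (Corollary \ref{cor:v18}) comes from the degree-$18$ resultant of the residual cubic with a Hessian that must be renormalized in characteristic $2$ (dividing out forced $2$-divisibilities and subtracting $\alpha^2 g$), and the second-kind analysis hinges on the char-$2$ ramification structure of $\pi|_\ell$ (only one or two ramification points, wild ramification at $II$ fibres), yielding $v(\ell)\le 16$, or $\le 20$ in the exceptional ramification type $(2,2)$ (Corollary \ref{cor:v20}). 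Your premise that second-kind lines are ``few and specially situated'' is not available; what is available is the explicit family $\mZ$ of Lemma \ref{lem:Z}.

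The final assembly is also not a double count of incident pairs, and as sketched it cannot close: with a $(2,2)$ second-kind line of valency up to $20$ present, naive incidence counting overshoots $64$. The paper instead trichotomizes by configuration. In the triangle case a hyperplane splits into four lines and every line of $S$ meets one of them; the dangerous $(2,2)$ subcase is tamed by the order-$3$ symplectic automorphism of $\mZ$, which forces the three companion lines to satisfy $v\le 17$ via an extra zero of the resultant ($\lambda^4\mid R$, beyond Lemma \ref{lem:R}). In the square case one fibres by the $I_4$ square, uses $v(\ell)\le 12$ (Lemma \ref{lem:v12}) for sections/bisections, and kills the fibre configurations with more than $20$ vertical lines by characteristic-$2$ Weierstrass-form arguments (inseparable base change when $j$ is a square; the supersingular-place criterion of Lemma \ref{lem:ss} via Tate's algorithm). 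In the squarefree case one needs the conic valency bound ($\le 46$) from \cite{RS14} plus an $I_0^*$-fibre counting argument. None of these components appears in, or is replaced by, your proposal; your paragraph on bounding horizontal lines through disjointness of sections and the $\MW$ lattice is a placeholder where the actual case analysis has to happen.
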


After this paper was written, A.~Degtyarev stated in \cite{Degtyarev},
partly based on machine-aided calculations from \cite{DIS},
that the bound of Theorem \ref{thm} can be improved to 60 in characteristic $2$.
The record is attained by a quartic with $S_5$-action which we shall exhibit explicitly in Section~\ref{s:ex}.
We point out that unlike in other characteristics (by work of us and Veniani \cite{RS}, \cite{RS14}, \cite{Veniani}),
there exist non-smooth quartic K3 surfaces with more lines than in the smooth case,
in fact with as many as 68 lines in characteristics $2$  (see Remark \ref{rem:68}),
indicating how special this situation is.


We emphasize that originally we were expecting the bound  from characteristics $\neq 2,3$ to go up in characteristic $2$,
since just like in characteristic $3$,
there may be quasi-elliptic fibrations
and the flecnodal divisor may degenerate.
With this in mind,
our previous best bound ended up at $84$ in \cite[Prop.~1.3]{RS14}.
In contrast,
this paper will show that quasi-elliptic fibrations 
in characteristic $2$ cannot arise from lines on smooth quartics (see Proposition \ref{prop:qe}).
Then we will make particular use of special features of elliptic fibrations
in characteristic 2,
and of the Hessian, to  preserve the original bound of $64$.


The paper is organized as follows.
Section \ref{s:fibr} reviews genus one fibrations for smooth quartics with lines
with a special view towards quasi-elliptic fibrations.
In Section \ref{s:RH}, we discuss ramification types and the Hessian of a cubic in characteristic $2$
to derive Segre's upper bound for the 
the number of lines  met by  
a line of the so-called first kind on a smooth quartic surface.
Lines of the second kind are analyzed in Section \ref{s:2nd}, much in the spirit of \cite{RS}.
The proof of Theorem \ref{thm} is given in Sections \ref{s:triangle} - \ref{s:sqfree}
by distinguishing which  basic configurations of lines occur on the quartic.
The paper concludes with an example of a smooth quartic over $\F_4$ containing 60 lines over $\F_{16}$.

\begin{Convention} 
Throughout this note we work over an algebraically closed field $\KK$ of characteristic $p=2$,
since base change does not affect the validity of Theorem \ref{thm}.
\end{Convention}

 \section{Genus one fibration}
 \label{s:fibr}

 Let $S$ be a smooth quartic surface over an algebraically closed field $k$
 of characteristic $2$.
 Assuming that $S$ contains a line $\ell$,
 the linear system $|\mathcal O_S(1)-{\ell} |$ gives a pencil of cubic curves;
 explicitly these are obtained as residual cubics $C_t$
 when $S$ is intersected with the pencil of planes $H_t$ containing $\ell$.
 In particular, we obtain a fibration
\begin{equation} \label{eq:pi}
\pi: S\to\PP^1
\end{equation}
whose fibers are reduced curves of arithmetic genus one.
Note that in general there need not be a section,
and due to the special characteristic,
the general fiber need not be smooth,
i.e. the fibration may a priori be quasi-elliptic.
In fact, we shall instantly rule this latter special behaviour out,
but before doing so,
we note the limited  types of singular fibers (in Kodaira's notation \cite{K})
which may arise from a plane curve of degree $3$:

\begin{table}[ht!]
\begin{tabular}{c|l}
Kodaira type & residual cubic\\
\hline
$I_1$ & nodal cubic\\
$I_2$ & a line and a conic meeting transversally in 2 points\\
$I_3$ & 3 lines meeting transversally in 3 points\\
$II$ & cuspidal cubic\\
$III$ &a line and a conic meeting tangentially in a point\\
$IV$ & 3 lines meeting transversally in a point
\end{tabular}
\vspace{.3cm}
\label{tab:F}
\caption{Possible singular fibres of $\pi$}
\end{table}

While this is already quite restrictive for any genus one fibration,
it determines the singular fibers of a quasi-elliptic fibration in characteristic $2$ completely:
the general fiber has Kodaira type $II$, 
and for Euler-Poincar\'e characteristic reasons,
there are exactly 20 reducible fibers, all of type $III$.
It turns out 
that this together with the theory of Mordell-Weil lattices
provides enough information to rule out quasi-elliptic fibrations in our characteristic $2$ set-up:

\begin{Proposition}
\label{prop:qe}
The fibration $\pi$ cannot be quasi-elliptic.
\end{Proposition}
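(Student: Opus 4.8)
The plan is to argue by contradiction, feeding the fiber configuration of a quasi-elliptic $\pi$ into the Shioda--Tate / Mordell--Weil lattice machinery. So suppose $\pi$ is quasi-elliptic. By the discussion preceding the statement, the generic fiber then has Kodaira type $II$ and there are exactly $20$ reducible fibers, all of type $III$. Since a $\Kthree$ surface carrying a quasi-elliptic fibration in characteristic $2$ is unirational, hence supersingular, we have $\rho(S)=22$, and $\NS(S)$ is a $2$-elementary lattice of signature $(1,21)$ with $\disc\NS(S)=-2^{2\sigma}$ for some Artin invariant $1\le\sigma\le 10$.

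Next I would extract explicit divisor classes. Write the $i$-th reducible fibre as $F_i=L_i+Q_i$ with $L_i$ a line and $Q_i$ a conic meeting tangentially, so that $L_i^2=Q_i^2=-2$ and $L_i\cdot Q_i=2$. The twenty lines $L_i$ lie in distinct fibres, hence are pairwise disjoint and satisfy $L_i\cdot F=0$; moreover, as $\ell$ and $L_i$ are two distinct lines in the common plane $H_{t_i}$, they meet in exactly one point, so $\ell\cdot L_i=1$. Together with the fibre class $F$ (for which $F^2=0$, while $\ell\cdot F=3$ since $\ell$ is a $3$-section and $\ell^2=-2$) the classes $F,\ell,L_1,\dots,L_{20}$ span a rank-$22$ sublattice $\Lambda\subseteq\NS(S)$. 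A short computation --- clearing the orthogonal block $-2I_{20}$ of the $L_i$ by a Schur complement, which turns the $\langle F,\ell\rangle$-block $\left(\begin{smallmatrix}0&3\\3&-2\end{smallmatrix}\right)$ into $\left(\begin{smallmatrix}0&3\\3&8\end{smallmatrix}\right)$ of determinant $-9$ --- yields $\disc\Lambda=2^{20}\cdot(-9)=-9\cdot 2^{20}$.

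The contradiction should come from comparing discriminants: since $\Lambda$ and $\NS(S)$ both have rank $22$, we get $9\cdot 2^{20}=[\NS(S):\Lambda]^2\,2^{2\sigma}$, and the odd factor $9$ forces an element of order $3$ in $\NS(S)/\Lambda$, namely a class $D\notin\Lambda$ with $3D\in\Lambda$, reflecting that the $3$-section $\ell$ is, modulo vertical and $2$-torsion classes, thrice a section. The main obstacle is that this alone is \emph{not} yet contradictory: $9$ is a perfect square and is absorbed into the index, and the order-$3$ class is a harmless artifact of choosing the trisection $\ell$ in place of a genuine section. To finish I would bring in the finer $2$-adic structure --- the genus of $\NS(S)$ is pinned down by $\sigma$ together with $2$-elementarity --- and check that no overlattice of $\Lambda$ of the required index is simultaneously $2$-elementary and contains a very ample class $H=F+\ell$ with $H^2=4$ and $H\cdot\ell=1$ representing the quartic. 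Should the lattice bookkeeping prove inconclusive, my fallback is the geometric reformulation: quasi-ellipticity forces the generic plane of the pencil $|\mathcal O_S(1)-\ell|$ to be tangent to $S$ at a cusp away from $\ell$, i.e.\ the line $\PP^1_\ell\subset(\PP^3)^\vee$ to lie in the cuspidal edge of the dual surface of $S$, and I would rule this out directly for a smooth quartic.
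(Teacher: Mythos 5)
Your setup is correct as far as it goes: the fiber analysis (generic fiber of type $II$, exactly twenty reducible fibers of type $III$), the supersingularity argument giving $\disc \NS(S)=-2^{2\sigma}$, and the Gram computation $\disc\Lambda=-9\cdot 2^{20}$ for the sublattice spanned by $F,\ell,L_1,\dots,L_{20}$ are all right. But you have honestly identified, and not closed, the gap: the discriminant comparison is consistent for every $\sigma$ (index $[\NS(S):\Lambda]=3\cdot 2^{10-\sigma}$), and neither of your two proposed finishes is carried out. Worse, the lattice route is most likely a dead end rather than merely incomplete: the $3$-part of the discriminant form of $\Lambda$ is the cyclic group $\Z/9$ coming from your block $\left(\begin{smallmatrix}0&3\\3&8\end{smallmatrix}\right)$, and its order-$3$ subgroup is isotropic (for a generator $g$ one has $q(3g)=9\,q(g)\equiv 0 \bmod 2\Z$), so an even overlattice killing the odd factor $9$ exists abstractly; the obstruction to quasi-ellipticity is simply not visible in this Gram data, and no amount of $2$-adic genus bookkeeping on $\Lambda$ alone can be expected to produce the contradiction. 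Your fallback via the cuspidal edge of the dual surface is a restatement of quasi-ellipticity with no argument attached.

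The missing idea is Mordell--Weil theory, which is exactly what the paper uses. First, your observation that $\NS(S)/\Lambda$ contains a class of order $3$ is a shadow of the paper's Lemma~\ref{lem:qe}: if $\pi$ had no section, its multisection index would be $3$, so $L=\langle \NS(S),F/3\rangle$ would be an integral overlattice with $\disc L=-2^{2\sigma}/9\notin\Z$ --- contradiction, hence $\pi$ admits a section $O$. Second --- and this is the decisive step your proposal has no substitute for --- the Mordell--Weil group of a jacobian quasi-elliptic fibration is finite, so the section $P$ induced by the trisection $\ell$ via Abel's theorem must have height zero. Writing $s=\ell.O$ and letting $r\leq 20$ be the number of $III$ fibers in which $O$ meets the linear component, the paper computes $P=\ell-2O+(4+2s)F-(\ell_1+\cdots+\ell_r)$ and
$h(P)=2\chi(\OO_S)+2P.O-\sum_v \mathrm{corr}_v(P)=10+6s-\tfrac32 r$,
and $h(P)=0$, i.e.\ $20+12s=3r$, is impossible modulo $3$. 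Note that this contradiction exploits precisely the data your lattice $\Lambda$ does not record: the finiteness of the Mordell--Weil group (a geometric fact special to quasi-elliptic fibrations, cf.\ \cite{RuS}) and the incidences of the section $O$ with the fiber components, entering through the correction terms $1/2$ at the $20-r$ fibers where $O$ meets the conic.
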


\begin{proof}
Assume to the contrary that $\pi$ is quasi-elliptic.
Then $S$ automatically is unirational,
and thus supersingular,
i.e. the N\'eron-Severi group $\NS(S)$ has rank $22$
equalling the second Betti number;
endowed with the intersection pairing,
the N\'eron-Severi lattice has discriminant
\begin{eqnarray}
\label{eq:Artin}
\disc \NS(S) = -2^{2\sigma} \;\;\; \text{ for some } \; \sigma\in\{1,\hdots,10\}
\end{eqnarray}
by \cite{Artin}.
We will use the following basic result
whose proof  resembles that of \cite[Thm. 2]{ES-2}.

\begin{Lemma}
\label{lem:qe}
If $\pi$ is quasi-elliptic,
then it admits a section.
\end{Lemma}

\begin{proof}[Proof of Lemma \ref{lem:qe}]
If there were no section,
then $\pi$ would have multisection index $3$,
thanks to the trisection $\ell$.
Hence we can define an auxiliary integral lattice $L$ of the same rank
by dividing the general fiber $F$ by $3$:
\[
L = \langle \NS(S), F/3\rangle.
\] 
Since $L$ can be interpreted as index $3$ overlattice of $\NS(S)$,
we obtain
\[
\disc L = \disc \NS(S)/3^2.
\]
By \eqref{eq:Artin}, this is not an integer, despite $L$ being integral, giving the desired contradiction.
\end{proof}

We continue the proof of Proposition \ref{prop:qe} 
by picking a section of $\pi$ and denoting it by $O$.
Then $\ell$ induces a section $P$ of $\pi$
which is obtained fiberwise (or on the generic fiber)
from $P$ by Abel's theorem.
By the theory of Mordell-Weil lattices \cite{ShMW}
(which also applies to quasi-elliptic fibrations),
the class of $P$ in $\NS(S)$ is computed as follows.
Let $r$ be the number of reducible fibers
which are intersected by $O$ in the linear component,
and denote the respective component by $\ell_i$.
Let $\ell.O=s$.
We claim that
\[
P = \ell -2O +(4+2s) F - (\ell_1+\hdots+\ell_r).
\]
To see this, it suffices to verify the following properties,
using the fact that $P\equiv \ell$ modulo the trivial lattice generated by $O$ and fiber components:
\begin{itemize}
\item
$P$ meets every fiber with multiplicity one in a single component
(the linear component; this is assured by subtracting $2O$ and the $\ell_i$);
\item
$P^2=-2$ (giving the coefficient of $F$ in the representation of $P$).
\end{itemize}
But then the Mordell-Weil group of a jacobian quasi-elliptic fibration is always finite
(compare \cite[\S 4]{RuS}),
so $P$ has height zero.
Using $P.O=8+3s-r$ and the correction terms $1/2$ from each of the $20-r$ reducible fibers
where $O$ meets the conic while $P$ always meets the line,
we find
\[
h(P) = 2\chi(\mathcal O_S) + 2 P.O - \sum_v \mbox{corr}_v(P) = 10 + 6s-\frac 32 r.
\]
Since the equation $h(P)=0$ has no integer solution
(reduce modulo $3$!),
we arrive at the required contradiction.
\end{proof}

\begin{Remark}
\label{rem:68}
Once quasi-elliptic fibrations are excluded,
one can adopt the techniques from \cite{RS}, \cite{RS14}
to prove without too much difficulty
 that $S$ cannot contain more than 68 lines.
While this is still a few lines away from Theorem \ref{thm},
it is an interesting coincidence that there exists a one-dimensional family of non-smooth quartic K3 surfaces
(parametrized by $\lambda$),
i.e.~admitting only isolated ordinary double points as  singularities,
which contain as many as 68 lines:
\[
\mX = \{(x_1^3+x_2^3)x_3+\lambda x_2^3x_4+x_1x_2x_4^2+x_3^4=0\}.
\]
We found this family experimentally during our search for smooth quartics with many lines (see Section \ref{s:ex}).
Generically, there is only a single (isolated) singularity; it is located at $[0,0,0,1]$ and has type $A_3$.
The minimal resolution of a general member of the family is a supersingular K3 surface of Artin invariant $\sigma=2$.

Recently, Veniani proved in \cite{Veniani2} that 68 is indeed the maximum for the number of lines
on quartic surfaces with at worst isolated ordinary double point singularities in characteristic $2$,
and every surface attaining this maximum is projectively equivalent to a member of the above family.
\end{Remark}

\section{Ramification and Hessian}
\label{s:RH}

In this section,
we introduce two of the main tools for the proof of Theorem \ref{thm}.
It is instructive that both of them have different features in characteristic $2$ than usual.

\subsection{Ramification}
First we consider the ramification of the restriction of the morphism $\pi$
to the line $\ell$:
\[
\pi|_\ell: \ell \to \PP^1
\]
Since this morphism has degree $3$,
is always has exactly 1 or 2 ramification points in characteristic $2$
(because of Riemann-Hurwitz and wild ramification).
We distinguish the ramification type of $\ell$ according to the ramification points as follows:

\begin{table}[ht!]
\begin{tabular}{c|c}
ramification type & ramification points\\
\hline
$(1)$ & one simple\\
$(1,1)$ & two simple\\
$(1,2)$ & one simple, one double\\
$(2,2)$ & two double
\end{tabular}
\end{table}

The ramification type is relevant for our purposes
because often one studies the base change of $S$ over $k(\ell)$
where by definition the fibration corresponding to $\pi$ attains a section.
In fact, we will usually extend the base field to the Galois closure of $k(\ell)/k(\PP^1)$
where $\ell$ splits into three sections.
Note that the field extension $k(\ell)/k(\PP^1)$ itself is Galois
if and only if $\ell$ has ramification type $(2,2)$.
Encoded in the ramification,
one finds how the singular fibers behave under the base change,
and more importantly, how they are intersected by the sections
obtained from $\ell$.

\subsection{Hessian}

We now introduce the Hessian of the residual cubics $C_\lambda$.
To this end, we apply a projective transformation,
so that
\[
\ell=\{x_3=x_4=0\}\subset\PP^3.
\]
Then the pencil of hyperplanes in $\PP^3$ containing $\ell$ is given by
\[
H_\lambda:\;\; x_4 = \lambda x_3
\]
(including the hyperplane $\{x_3=0\}$ at $\lambda=\infty$,
so everything in what follows can be understood in homogenous coordinates of $\PP^1$
parametrising $H_\lambda$;
we decided to opt for the affine notation for simplicity).
The residual cubics $C_\lambda$ of $S\cap H_\lambda$ 
are given by a homogeneous cubic polynomial
\[
g \in k[\lambda][x_1,x_2,x_3]_{(3)}
\]
which is obtained from the homogeneous quartic polynomial $f\in k[x_1,x_2,x_3,x_4]$ 
defining  $S$
by substituting $H_\lambda$ for $x_4$ and factoring out $x_3$.
Outside characteristic $2$,
the points of inflection of $C_\lambda$
(which are often used to define a group structure on $C_\lambda$,
at least when one of them is rational)
are given by the Hessian
\[
\det \begin{pmatrix}
\frac{\partial ^2 g}{\partial x_i \partial x_j}
\end{pmatrix}_{1\leq i,j\leq 3}
\]
In characteristic $2$, however,
some extra divisibilities in the coefficients force us to modify the Hessian formally
using the $x_1x_2x_3$-coefficient $\alpha$ of $g$
until it takes the following shape 
(understood algebraically over $\Z$ in terms of the generic coefficients of $g$
before reducing modulo $2$ and substituting):
\[
h = \frac 14 \left(\frac 12 \det \begin{pmatrix}
\frac{\partial ^2 g}{\partial x_i \partial x_j}
\end{pmatrix}_{1\leq i,j\leq 3} - \alpha^2 g\right)
\in k[\lambda][x_1,x_2,x_3]_{(3)}.
\]
(These manipulations must be known to the experts
as they amount to the saturation of the ideal generated
by $g$ and its Hessian over $\Z[\lambda][x_1,x_2,x_3]$.)
In order to use the Hessian for considerations of lines on $S$,
Segre's key insight from \cite{Segre}
 was that $h$ vanishes on each linear component of a given residual cubic $C_{\lambda_0}$
 (or, if $C_{\lambda_0}$ is singular, but irreducible, in its singularity).
That is, any line in a fiber of $\pi$ (i.e. intersecting $\ell$)
gives a zero of the following polynomial $R$,
obtained by intersecting $g$ and $h$ with $\ell$ (i.e. substituting $x_3=0$)
and taking the resultant with respect to either remaining homogeneous variable:
\[
R =\mbox{resultant}_{x_1} (g(x_1,1,0), h(x_1,1,0)) \in k[\lambda].
\]
More precisely, one computes that $R$ has generically degree $18$,
and that each line contributes to the zeroes of $R$ on its own, i.e.
if some fiber contains three lines, then $R$ attains a triple root:

\begin{Lemma}
\label{lem:R}
In the above set-up,
assume that $\pi$ has
\begin{enumerate}
\item
a fiber of type $I_3$ or $IV$ at $\lambda_0$,
then $(\lambda-\lambda_0)^3\mid R$;
\item
a fiber of type $I_2$ or $III$ with double ramification at $\lambda_0$,
then $(\lambda-\lambda_0)^2\mid R$.
\end{enumerate}
\end{Lemma}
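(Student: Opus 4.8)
The plan is to convert the divisibility statement into a local intersection count on $\ell$ and then to feed in Segre's vanishing together with the special shape of the trisection $\pi|_\ell$ in characteristic $2$. Writing $\mathrm{lc}$ for the leading coefficient in $x_1$ and factoring $g(x_1,1,0)$ over $\overline{k(\lambda)}$, the resultant expands as $R=\mathrm{lc}(g|_\ell)^{\deg_{x_1} h|_\ell}\prod_{i=1}^{3}h(P_i(\lambda))$, where $P_1(\lambda),P_2(\lambda),P_3(\lambda)$ are the three points cut out on $\ell$ by the residual cubic $C_\lambda$ (the roots of $g(x_1,1,0)$). Equivalently, $\mathrm{ord}_{\lambda_0}R$ is the total intersection multiplicity over $\lambda_0$ of the trisection curve $\Gamma=\{g|_\ell=0\}$ and the Hessian curve $\Theta=\{h|_\ell=0\}$ in the affine $(x_1,\lambda)$-plane, once one checks that no intersection point runs off to the point at infinity of $\ell$ (which can be arranged by the initial projective change of coordinates, or handled separately). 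It then suffices to produce the asserted order of vanishing at the feet on $\ell$ of the lines in the fibre $C_{\lambda_0}$.

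For case (1) the cubic $g_{\lambda_0}$ is a product of three linear forms, one per line of the fibre. Segre's insight makes $h$ vanish on each of these three components, so the cubic form $h_{\lambda_0}$ vanishes on all three distinct lines and is therefore a scalar multiple $h_{\lambda_0}=c_0\,g_{\lambda_0}$. Consequently $h|_\ell-c_0\,g|_\ell$ vanishes identically in $x_1$ at $\lambda=\lambda_0$, hence is divisible by $\lambda-\lambda_0$; writing $h|_\ell=c_0\,g|_\ell+(\lambda-\lambda_0)\tilde h$ and using $g(P_i(\lambda))=0$, each factor becomes $h(P_i(\lambda))=(\lambda-\lambda_0)\,\tilde h(P_i(\lambda))$, so the product carries $(\lambda-\lambda_0)^3$ and $(\lambda-\lambda_0)^3\mid R$. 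Equivalently, on the common root locus $\mathrm{Res}_{x_1}(g|_\ell,h|_\ell)=\mathrm{Res}_{x_1}(g|_\ell,(\lambda-\lambda_0)\tilde h)$, which visibly carries $(\lambda-\lambda_0)^{\deg_{x_1}g|_\ell}=(\lambda-\lambda_0)^{3}$.

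For case (2) one has $g_{\lambda_0}=L\cdot Q$ with $L$ a line and $Q$ a conic, and Segre now gives only that $h_{\lambda_0}$ vanishes on $L$, i.e.\ that $h$ vanishes at the foot $P=L\cap\ell$. I would describe the two sheets of $\pi|_\ell$ that collide over $\lambda_0$ as the roots of a monic quadratic $x_1^{2}+p(\lambda)x_1+q(\lambda)$; ``double ramification at $\lambda_0$'' means precisely that these two sheets meet at $P$, so $p(\lambda_0)=0$ and the double root $a=x_1(P)$ satisfies the characteristic-$2$ relation $a^{2}=q(\lambda_0)$. Reducing $h|_\ell\equiv r_1(\lambda)x_1+r_0(\lambda)$ modulo this quadratic gives $\prod_{i=1,2}h(P_i(\lambda))=r_0^{2}+p\,r_0r_1+q\,r_1^{2}$, whose value at $\lambda_0$ is $r_1(\lambda_0)^{2}\,(a^{2}+q(\lambda_0))=0$, the vanishing being automatic from $h(P)=0$ together with $2q(\lambda_0)=0$ in characteristic $2$; this yields a free simple zero. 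The extra degeneracy carried by the word ``double'' (as opposed to an ordinary square-root collision) is exactly the statement that the coefficient of $\lambda-\lambda_0$ in $\prod_{i=1,2}h(P_i)$, which works out to $r_1(\lambda_0)^{2}(q_1+a\,p_1)$, also vanishes, so $\mathrm{ord}_{\lambda_0}\prod_{i=1,2}h(P_i)\ge 2$ and $(\lambda-\lambda_0)^{2}\mid R$.

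The main obstacle throughout is the characteristic-$2$ geometry: one must work with the formally corrected Hessian $h$ rather than the naive determinant, so Segre's vanishing has to be re-derived for $h$, and the sheet-collision analysis takes place over the wildly ramified cover $\pi|_\ell$, where the familiar tame square-root Puiseux picture is replaced by its characteristic-$2$ analogue. In particular, identifying the geometric notion of ``double ramification'' with the algebraic condition that the linear coefficient $q_1+a\,p_1$ above vanishes is the step I expect to require the most care, together with the routine but necessary control of the foot of $\ell$ at infinity in the resultant bookkeeping.
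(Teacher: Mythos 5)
Your case (1) is correct and is essentially the intended argument: Segre's vanishing on all three (distinct) lines forces $h_{\lambda_0}=c_0\,g_{\lambda_0}$, hence $(\lambda-\lambda_0)\mid(h-c_0g)$, and the resultant manipulation yields $(\lambda-\lambda_0)^3\mid R$; your second, Sylvester-style formulation is the better one, since it sidesteps the leading-coefficient/point-at-infinity caveat that your product formula requires.

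Case (2), however, rests on a wrong local model, and this is a genuine gap. In this paper's taxonomy (forced by Riemann--Hurwitz with wild ramification: the different of $\pi|_\ell$ has total degree $4$; a \emph{simple} point has $e=2$ and is wild, a \emph{double} point has $e=3$ and is tame), ``double ramification at $\lambda_0$'' means that $\pi|_\ell$ is \emph{totally} ramified there: all three points of $\ell\cap C_\lambda$ collide at one point $P$. You can see this concretely in the family $\mZ$, where $g|_\ell=x_1^3+\lambda x_2^3$ and the two double points are the totally ramified points of the $\mu_3$-Kummer cover, and in Lemma \ref{lem:ram} (tangency to a node gives local multiplicity $1+2=3$). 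Consequently $g|_\ell$ is \emph{irreducible} over $k[[\lambda-\lambda_0]]$ --- a tame degree-$3$ extension with cube-root Puiseux sheets --- so the factorization into a linear sheet times a monic quadratic $x_1^2+p(\lambda)x_1+q(\lambda)$, on which your entire computation is built, does not exist at a double point. Worse, your proposed algebraic translation of ``double,'' namely $p(\lambda_0)=0$ together with $q_1+a\,p_1=0$, actually characterizes a \emph{simple} ($e=2$) point whose wild different jumps from $2$ to $4$, i.e.\ the lone ramification point of type $(1)$ --- not a double point.

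With the correct tame model the inputs you feed in are also insufficient. Since $\ell\subset S$ forces $g|_\ell=A(x_1,x_2)+\lambda B(x_1,x_2)$ to be linear in $\lambda$, after centering one has $g|_\ell=c\,x_1^3+(\lambda-\lambda_0)B$ with $b=B(P)\neq0$ (smoothness of $S$), and the three sheets are $P_i=\zeta^i\beta(\lambda-\lambda_0)^{1/3}+\cdots$ with $\beta^3=b/c$. Writing $h_{\lambda_0}|_\ell=a_0+a_1x_1+a_2x_1^2+a_3x_1^3$, Segre's vanishing along $L$ gives only $a_0=0$, and then $\prod_i h(P_i)=a_1^3(b/c)(\lambda-\lambda_0)+O\bigl((\lambda-\lambda_0)^2\bigr)$: your hypotheses produce a \emph{simple} zero of $R$, not a double one. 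The missing ingredient is precisely the geometry you never used --- that $P$ is a node of $L+Q$ with $\ell$ tangent to the conic branch: one needs that $h_{\lambda_0}$ vanishes at the node \emph{through the residual factor}, i.e.\ writing $h_{\lambda_0}=L\cdot C'$ (possible since $L\mid h_{\lambda_0}$), the conic $C'$ also passes through $P$. This is the characteristic-$2$ analogue, for the corrected Hessian, of the classical fact that the Hessian of a nodal cubic passes through (indeed is singular at) the node, and it must be verified for $h$ as defined here. Granting it, $h_{\lambda_0}|_\ell$ vanishes at $P$ to order $\geq2$, so $a_1=0$, and the Puiseux product becomes $a_2^3(b/c)^2(\lambda-\lambda_0)^2+\cdots$, which is exactly the asserted divisibility $(\lambda-\lambda_0)^2\mid R$. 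The paper establishes the lemma by explicit computation in this spirit (cf.\ \cite{RS}); your case (1) matches it, but your case (2) needs to be redone along these lines.
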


For degree reasons, one directly obtains the following upper bound for the \emph{valency} $v(\ell)$ of 
the line $\ell$,
i.e. the number of other lines on $S$ met by $\ell$:

\begin{Corollary}[Segre]
\label{cor:v18}
If $R\not\equiv 0$, then $v(\ell)\leq 18$.
\end{Corollary}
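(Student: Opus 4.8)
The plan is to read the bound off $R$ by a direct count, since Lemma~\ref{lem:R} already isolates the only substantial input. Every line $m\neq\ell$ meeting $\ell$ spans a unique plane together with $\ell$, namely one of the $H_\lambda$; hence $m$ occurs as a linear component of the residual cubic $C_\lambda$, i.e.\ inside a fiber of $\pi$. Reading off the table of admissible fibers, such a fiber contains no line (types $I_1$, $II$), exactly one line (types $I_2$, $III$), or exactly three lines (types $I_3$, $IV$); in particular it never contains exactly two. I would therefore group the $v(\ell)$ lines according to the fibers carrying them and estimate each fiber's contribution to $\deg R$ separately.

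First I would invoke the vanishing property of $h$ recalled above: if $m\subset C_{\lambda_0}$ is a linear component, then the point $m\cap\ell$ (viewed on $\ell\cong\PP^1$, so that no intersection point is lost in passing to the affine resultant) is a common zero of $g(\,\cdot\,,1,0)$ and $h(\,\cdot\,,1,0)$, whence $(\lambda-\lambda_0)\mid R$. For a fiber of type $I_2$ or $III$ this simple factor already accounts for its single line, while for a fiber of type $I_3$ or $IV$, which carries three lines, Lemma~\ref{lem:R}(1) upgrades the divisibility to $(\lambda-\lambda_0)^3\mid R$, matching its three lines. Since distinct fibers have distinct parameters $\lambda_0$, the resulting local factors are pairwise coprime, so $R$ is divisible by a polynomial of degree $\sum_{\lambda_0}n(\lambda_0)=v(\ell)$, where $n(\lambda_0)\in\{1,3\}$ is the number of lines over $\lambda_0$. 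As $R\not\equiv0$ and $\deg R\leq 18$, this forces $v(\ell)\leq 18$.

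I expect no real obstacle at the level of the corollary itself: the genuinely delicate statement — that a three-line fiber forces a \emph{triple} root of $R$, even in type $IV$, where all three lines pass through one point and the corresponding zeros on $\ell$ may collide — is precisely the content of Lemma~\ref{lem:R}(1), which I am free to assume. The only points deserving a moment's attention are that no fiber carries exactly two lines (so that the dichotomy $n(\lambda_0)\in\{1,3\}$ is exhaustive) and that the factors attached to different fibers are coprime; both follow immediately from the fiber table and from the fact that a line other than $\ell$ determines its plane, hence its parameter $\lambda_0$, uniquely.
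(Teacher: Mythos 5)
Your proof is correct and follows essentially the same route as the paper, which deduces the corollary ``for degree reasons'' from the generic degree $18$ of $R$ together with Lemma~\ref{lem:R}: each line in a fiber contributes its own root of $R$, with triple roots for the three-line fibers. Your additional care about the dichotomy $n(\lambda_0)\in\{1,3\}$, the coprimality of the local factors, and the intersection point possibly lying at infinity on $\ell$ merely makes explicit what the paper leaves implicit.
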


This makes clear
that we have to carefully distinguish whether $R$ vanishes identically or not.
Recall from \cite{Segre}, \cite{RS}
how this leads to the following terminology:

\begin{Definition}
The line $\ell$ is said to be of the \emph{second kind} if $R\equiv 0$.
Otherwise, we call $\ell$ a line of the \emph{first kind}.
\end{Definition}

We will next show that lines of the second kind behave essentially
as in characteristic $\neq 2,3$.
For lines of the first kind,
the different quality of ramification changes the situation substantially,
but it is not clear (to us) whether the valency bound from Corollary \ref{cor:v18} is still sharp.
(The lines on the record surface from Section \ref{s:ex} have all valency 17.)


\section{Lines of the second kind}
\label{s:2nd}

Since lines of the second kind in characteristic $2$
turn out to behave mostly like in characteristics $\neq 2,3$,
we will be somewhat sketchy in this section.
That is, while trying to keep the exposition as self-contained as possible,
we will refer the reader back to \cite{RS} for the details whenever possible.

Let $\ell$ be a line of the second kind.
Then, by definition,
$\ell$ is contained in the closure of the flex locus on the smooth fibers.
This severely limits the way how $\ell$ may intersect the singular fibers.
As in characteristics $\neq 2,3$ in \cite{RS},
one obtains the following configurations
depending on the ramification:

\begin{Lemma}
\label{lem:ram}
A line of the second kind may intersect the singular fibers of $\pi$
depending on the ramification as follows:
\begin{table}[ht!]
\begin{tabular}{c|c|l}
ramification & fibre type & configuration\\
\hline
\hline
unramified & $I_1$ & 3 smooth points\\
&
$I_3$ & 1 smooth point on each component\\
&
$IV$ & 1 smooth point on each component\\
\hline
simple &
$II$ & 1 smooth point and  the cusp\\
\hline
double &
$I_1$ & tangent to the node\\
&
$I_2$ &  tangent to one of the nodes\\
&
$IV$ & the triple point
\end{tabular}
\end{table}
\end{Lemma}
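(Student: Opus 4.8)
The plan is to classify the possible intersections of a line $\ell$ of the second kind with each singular fiber type by combining two constraints: the local behaviour forced by the ramification of $\pi|_\ell$ at that fiber, and the flex condition defining the second kind. The key mechanism is that $\ell$ being of the second kind means $R\equiv 0$, so (unlike Corollary \ref{cor:v18}) $\ell$ lies in the closure of the flex locus of the smooth fibers. Concretely, each point where $\ell$ meets a fiber $C_\lambda$ must be a limit of inflection points of nearby smooth cubics, which I would track through the modified Hessian $h$ introduced in Section \ref{s:RH}. Since $h$ vanishes on each linear component and in the singularity of each singular cubic, the intersection point of $\ell$ with a singular fiber is constrained to lie on the singular locus or on components that inherit the flex behaviour.

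First I would organize the casework by the two data that label the rows of the table: the ramification of $\pi|_\ell$ at $\lambda_0$ (unramified, simple, or double) and the Kodaira type of the fiber $C_{\lambda_0}$. The ramification controls the intersection multiplicities of $\ell$ with the fiber $C_{\lambda_0}$: in the unramified case $\ell$ meets the fiber in three distinct points, in the simple case in a point of multiplicity two plus one simple point, and in the double case the situation degenerates so that $\ell$ is tangent or passes through a singular/multiple point. Then for each fiber type I would intersect $\ell$ with the fiber's components, counting multiplicities, and impose the flex constraint. For instance, in the unramified $I_3$ and $IV$ cases the three intersection points are forced to be one smooth point on each linear component, since a second-kind line cannot pass through the singular point of the fiber while meeting three distinct points; in the double $IV$ case the only way to absorb the tangency is for $\ell$ to pass through the triple point.

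The main obstacle will be the double-ramification rows, where the intersection degenerates and one must argue that the limiting position of the flex forces $\ell$ to be tangent to a node (for $I_1$, $I_2$) or to pass through the multiple point (for $IV$), rather than meeting the fiber in some other configuration. Here I expect the cleanest route is to specialize the local equation of $g$ and $h$ at $\lambda_0$ and read off the coincidence of the flex limit with the node or triple point directly, exactly as in the characteristic $\neq 2,3$ analysis of \cite{RS}; the novelty in characteristic $2$ is that the ramification types themselves are more restricted (one or two ramification points, by Riemann--Hurwitz and wild ramification), which actually prunes the table. I would also need to rule out the configurations that do \emph{not} appear, namely any meeting with a fiber of type $III$ or $II$ beyond those listed, which follows because the flex limit cannot coincide with a tangential intersection point or an isolated ramification pattern incompatible with the second-kind condition.

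Since the excerpt announces that ``lines of the second kind in characteristic $2$ turn out to behave mostly like in characteristics $\neq 2,3$,'' my expectation is that the bulk of the argument is a verbatim transfer from \cite{RS}, with the characteristic-$2$ input entering only through the revised list of admissible ramification types and the modified Hessian $h$. Accordingly, I would present the proof as a case-by-case verification keyed to the table, citing \cite{RS} for the geometric limiting arguments that are insensitive to the characteristic and supplying the short characteristic-$2$ specific checks where the ramification theory differs.
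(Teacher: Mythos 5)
Your proposal takes essentially the same route as the paper: the paper proves this lemma simply by asserting that the case analysis of \cite{RS} carries over, with the characteristic-$2$ input entering exactly where you say it does --- through the restricted list of ramification types (wild ramification leaving only one or two ramification points) and the modified Hessian $h$ cutting out the flex locus whose closure contains $\ell$. Your case-by-case verification keyed to ramification type and Kodaira fiber type, citing \cite{RS} for the characteristic-insensitive limiting arguments, is precisely the intended argument.
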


We emphasize that fibers of type $II$ and $III$ necessarily come with wild
ramification
in characteristic $2$;
in fact they impose on the discriminant a zero of multiplicity at least 4 by \cite{SS}.
(In mixed characteristic, i.e.~when specializing from characteristic zero to characteristic $2$,
this can often be explained as some fiber of type $II$ 
absorbing other irreducible singular fibers without changing its type, 
compare Lemma \ref{lem:ramconfig} with the results valid in characteristic zero from \cite[Lem.~4.3]{RS}.)

As in \cite{RS}  we continue to argue with the base change of $S$
to the Galois closure of $k(\ell)/k(\PP^1)$.
By construction, this sees $\ell$ split into three sections;
taking one as zero for the group law,
the others necessarily become $3$-torsion.
In practice, this implies that fibers of type  $I_1$ and $I_3$ have to even out
-- including two possible degenerations:
\begin{itemize}
\item
a pair of fibers $I_1, I_3$ might merge to type $IV$;
\item
two fibers of type $I_1$ might merge to $I_2$, still paired with two $I_3$ fibers.
\end{itemize}
In the next table, we thus group the fibers according to their Kodaira type in single entries $II, IV$,
pairs $(I_1, I_3)$ and triples $(I_2, I_3, I_3)$.
Since  fiber type $I_2$ automatically comes with double ramification
by Lemma \ref{lem:ram},
we can bound the possible configurations of singular fibers
(where the precise numbers depend on the index of wild ramification of the $II$ fibers):

\begin{Lemma}
\label{lem:ramconfig}
For a line of the second kind,
the singular fibers of $\pi$ can be configured as follows:
\begin{table}[ht!]
\begin{tabular}{c|c|l}
ramification types & fibre configuration & possible degenerations\\
\hline
\hline
$(1), (1,2)$ & $1 \times II$ &\\
& $\leq 5 \times (I_1, I_3)$ & $IV$, at most one triple $(I_2, I_3, I_3)$\\
\hline
$(1,1)$ & $2 \times II$ & \\
&
$\leq 4 \times (I_1, I_3)$ & $IV$\\
\hline
$(2,2)$ &
$6 \times (I_1, I_3)$ & $IV$, at most two triples $(I_2, I_3, I_3)$\\
\end{tabular}
\end{table}
\end{Lemma}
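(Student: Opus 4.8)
The plan is to combine the $3$-torsion structure produced by the base change with a count of Euler numbers on the K3 surface $S$. Recall the mechanism behind the table preceding the statement: since $\ell$ is of the second kind, it lies in the closure of the flex locus, so for general $\lambda$ the three points $\ell\cap C_\lambda$ are collinear flexes of the smooth cubic $C_\lambda$. Passing to the Galois closure of $k(\ell)/k(\PP^1)$ splits them into three sections $O,P,Q$; taking $O$ as the zero section, the fact that $P,Q$ are flexes gives $3P\sim 3O\sim 3Q$ as divisors, so $O,P,Q$ form a cyclic group of order three in the Mordell--Weil group. I would take this $3$-torsion section as the main structural input, use it to determine which fibre configurations are admissible, and then fix the exact numbers by the Euler-number identity $e(S)=24$.

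First I would record how a torsion section meets the reducible fibres. By the theory of Mordell--Weil lattices \cite{ShMW}, a torsion section passes through a component whose class has order dividing $3$ in the component group; hence it runs through the identity component of every fibre of type $I_1,I_2,II,III$ (component group of order prime to $3$) and may hit a non-identity component only at fibres of type $I_3$ or $IV$ (component group $\Z/3\Z$). This matches the intersection patterns of Lemma \ref{lem:ram}, where on an $I_3$ fibre the three sections sit on the three distinct components and on an $IV$ fibre they meet at the triple point. The crucial consequence, that fibres of type $I_1$ and $I_3$ occur in equal numbers (``even out''), I would deduce from the $3$-isogeny determined by the torsion section, which interchanges multiplicative fibres of types $I_1$ and $I_3$; its justification I single out as the main difficulty below. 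Granting the pairing, the only admissible degenerations are the two in the statement: an $(I_1,I_3)$ pair may collapse to a single $IV$ fibre of the same Euler number $4$, and two $I_1$ fibres may merge to an $I_2$, which by Lemma \ref{lem:ram} then carries double ramification and is accompanied by two $I_3$ fibres, giving the triple $(I_2,I_3,I_3)$.

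With the qualitative shape fixed, the quantitative bounds follow from $e(S)=24$. A fibre of type $II$ is wildly ramified and contributes at least $4$ by \cite{SS}; each unramified pair $(I_1,I_3)$ and each $IV$ contributes $4$, and each triple $(I_2,I_3,I_3)$ contributes $8$. Moreover the number of $II$ fibres equals the number of simple ramification points, and the fibres sitting over double ramification points are precisely the $I_2$ (inside a triple) or $IV$ fibres, again by Lemma \ref{lem:ram}. Distributing the budget of $24$ accordingly reproduces each row: for type $(1)$ or $(1,2)$ the single $II$ fibre leaves room for at most five pairs, with at most one triple, namely the one fed by the unique double ramification point occurring in type $(1,2)$; for type $(1,1)$ the two $II$ fibres leave at most four pairs and, there being no double ramification, no triple; and for type $(2,2)$ there is no $II$ fibre, forcing exactly six pairs, now with up to two triples supplied by the two double ramification points.

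The step I expect to be most delicate is the ``evening out'' itself, i.e.\ proving that $I_1$ and $I_3$ fibres are genuinely forced to pair, together with the precise list of degenerations, rather than merely checking that the numbers are consistent. This rests on analysing the $3$-isogeny attached to the torsion section $P$ and tracking how both the section and the wild ramification behave under the base change to the Galois closure, where the passage to characteristic $2$ makes the bookkeeping subtle. Once this pairing is established, the Euler-number count above is routine and the table follows.
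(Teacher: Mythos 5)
Your proposal follows essentially the same route as the paper's own argument: base change to the Galois closure of $k(\ell)/k(\PP^1)$ so that $\ell$ splits into three sections two of which become $3$-torsion, the resulting pairing of $I_1$ with $I_3$ fibres with exactly the two degenerations $IV$ and $(I_2,I_3,I_3)$, and then the budget $e(S)=24$ distributed using the wild-ramification bound of at least $4$ for type $II$ from \cite{SS} together with the ramification bookkeeping of Lemma \ref{lem:ram} (simple points giving the $II$ fibres, double points feeding the $I_2$ or $IV$ degenerations). The ``evening out'' step you single out as delicate is precisely the step the paper itself only asserts (following the method of \cite{RS}) rather than proves in detail, and your $3$-isogeny mechanism is indeed the standard justification behind it, so your attempt matches the paper both in structure and in level of rigor.
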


\begin{Corollary}
\label{cor:v20}
Unless the line $\ell$ of the second kind has ramification type $(2,2)$,
it has valency 
\[
v(\ell)\leq 16.
\]
Otherwise one has $v(\ell)\leq 20$.
\end{Corollary}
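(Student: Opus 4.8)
The plan is to bound the valency $v(\ell)$ by counting the lines contributed by each singular fiber, using the configurations listed in Lemma \ref{lem:ramconfig}. Since $\ell$ is of the second kind, it meets each fiber in the prescribed way from Lemma \ref{lem:ram}; the lines on $S$ other than $\ell$ that $\ell$ can possibly meet are precisely the linear components of the singular fibers that $\ell$ touches. So the first step is to read off, from each type of fiber in the table, how many lines of $S$ passing through $\ell$ it can produce. A fiber of type $I_3$ or $IV$ consists of three lines, all of which $\ell$ meets, so it contributes $3$; a fiber of type $I_2$ contributes at most $1$ (only the linear component is a line on $S$, the conic is not); fibers of type $I_1$ and $II$ contribute $0$, since the residual cubic is irreducible and hence contains no line of $S$ through $\ell$ other than possibly the reducible degenerations already accounted for.

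Next I would carry out the arithmetic for each ramification type. For type $(2,2)$, the configuration is $6\times(I_1,I_3)$, so na\"ively one gets $6\times 3 = 18$ lines from the $I_3$ fibers; the allowed degenerations to $IV$ preserve this count ($IV$ also contributes $3$), while each triple $(I_2,I_3,I_3)$ replaces a pair of fibers and must be checked to see whether it raises the total. Here one $I_2$ contributes $1$ and two $I_3$'s contribute $3$ each, giving $7$ where the undegenerated count from those slots gives $6$, so each of the at most two triples adds $1$, pushing the bound to $18+2 = 20$. For the remaining ramification types I would do the same bookkeeping: type $(1,1)$ has $2\times II$ (contributing $0$) and at most $4\times(I_1,I_3)$, giving $4\times 3 = 12$, with the $IV$ degeneration neutral; types $(1)$ and $(1,2)$ have $1\times II$ and at most $5\times(I_1,I_3)$, giving $5\times 3 = 15$, and the single permitted triple $(I_2,I_3,I_3)$ adds at most $1$, for a total of $16$. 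Thus all cases other than $(2,2)$ yield $v(\ell)\leq 16$, and $(2,2)$ yields $v(\ell)\leq 20$, as claimed.

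The step I expect to require the most care is verifying that the counts above are genuinely exhaustive, that is, that every line of $S$ meeting $\ell$ really does arise as a linear component of one of the singular fibers enumerated in Lemma \ref{lem:ramconfig}, with no line counted twice and none overlooked. A line $m\neq\ell$ on $S$ meeting $\ell$ lies in the plane $H_\lambda$ spanned by $\ell$ and the intersection point, hence is a component of the residual cubic $C_\lambda$, so it does appear in some fiber; the subtlety is ensuring that the \emph{second-kind} hypothesis, via Lemma \ref{lem:ram}, forbids $\ell$ from meeting a reducible fiber in a way that would let several of its components simultaneously pass through a single point of $\ell$ and thereby escape the tally. In particular, for the double-ramification entries one must confirm that $\ell$ is tangent to a node or passes through a triple point exactly as stated, so that the lines through that point are correctly counted and the degeneration rules of Lemma \ref{lem:ramconfig} are the only ones available. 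Once this is confirmed, the corollary follows immediately by taking the maximum over the three rows of the table.
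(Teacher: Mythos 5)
Your proof is correct and matches the paper's (implicit) argument: the corollary is stated in the paper as a direct consequence of Lemma \ref{lem:ramconfig}, and your fiber-by-fiber count ($3$ lines per $I_3$ or $IV$, $1$ per $I_2$, $0$ per $I_1$ or $II$, with each triple $(I_2,I_3,I_3)$ adding one line over the two pairs it replaces) is exactly the intended bookkeeping, including the careful check that every line meeting $\ell$ appears as a fiber component. Nothing to add.
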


It is due to this result that we will have to pay particular attention
to lines of the second kind with ramification type $(2,2)$.
As it happens, quartics containing such a line are not hard to parametrise;
in fact, a comparison with the proof of  \cite[Lemma 4.5]{RS} shows that
exactly the same argument as in characteristics $\neq 2,3$ goes through:

\begin{Lemma}[Family $\mZ$]
\label{lem:Z}
Let $\ell$ be a line of the second kind on a smooth quartic $S$ with ramification type $(2,2)$.
Then $S$ is projectively equivalent to a quartic in the family $\mZ$ given by the homogeneous polynomials
$$
x_3x_1^3+x_4x_2^3+x_1x_2q_2(x_3,x_4)+q_4(x_3,x_4) \in k[x_1,x_2,x_3,x_4]_{(4)}
$$
where $\ell=\{x_3=x_4=0\}$,
$q_2  \in k[x_3, x_4]_{(2)}$ and $q_4 \in k[x_3, x_4]_{(4)}$.
\end{Lemma}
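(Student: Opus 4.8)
The plan is to combine the ramification hypothesis, which pins down the leading behaviour of the residual cubics along $\ell$, with the second-kind condition fed through the Hessian of Section~\ref{s:RH}, which clears the remaining unwanted coefficients. Put $\ell=\{x_3=x_4=0\}$ and expand the quartic as $f=\sum_{i+j\ge 1}x_3^ix_4^jP_{ij}(x_1,x_2)$ with $P_{ij}$ homogeneous of degree $4-i-j$; that $\ell\subset S$ is exactly the absence of monomials in $x_1,x_2$ alone. Substituting $x_4=\lambda x_3$ and dividing by $x_3$ yields $g=g_0+g_1x_3+g_2x_3^2+g_3x_3^3$ with $g_0=P_{10}+\lambda P_{01}$, $g_1=P_{20}+\lambda P_{11}+\lambda^2P_{02}$, and $g_2,g_3$ the analogous sums; here $g(x_1,x_2,0)=g_0$ is the binary cubic whose roots form the fibre of $\pi|_\ell$.

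First I would exploit the ramification. Type $(2,2)$ means that both ramification points of the degree-three map $\pi|_\ell$ are totally ramified (tamely, with $e=3$, in characteristic $2$), so over each the fibre is a single triple point; hence each of $P_{10},P_{01}$ is a perfect cube. Moving the two ramification values to $\lambda=0,\infty$ and their base points — distinct by smoothness of $S$ — to $[1:0],[0:1]$, and rescaling, I may assume
\[
g_0=x_1^3+\lambda x_2^3 .
\]
This still leaves the scalings of the four coordinates and the shears $x_i\mapsto x_i+(\text{linear in }x_3,x_4)$, all of which preserve $g_0$.

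Next I would bring in that $\ell$ is of the second kind. Write $g_1=ux_1^2+vx_1x_2+wx_2^2$ and $g_2=px_1+qx_2$ with $u,\dots,q\in k[\lambda]$, so that the $x_1x_2x_3$-coefficient of $g$ is $\alpha=v$. Carrying the $\Z$-lift of the Hessian $h$ of Section~\ref{s:RH} far enough and restricting to $x_3=0$, the factors $\tfrac12,\tfrac14$ and the correction $-\alpha^2g$ combine, after reduction modulo $2$, to
\[
h(x_1,x_2,0)=v^2x_1^3+(\lambda u^2+vw+\lambda p)x_1^2x_2+(\lambda uv+w^2+\lambda q)x_1x_2^2+\lambda v^2x_2^3 .
\]
Since $R\equiv 0$ forces $g_0$ and $h(x_1,x_2,0)$ to share a root identically in $\lambda$, and the roots of $x_1^3+\lambda x_2^3$ are permuted transitively by $\mathrm{Gal}(k(\lambda^{1/3})/k(\lambda))$ while $h(x_1,x_2,0)$ has coefficients in $k[\lambda]$, in fact $g_0\mid h(x_1,x_2,0)$; the outer coefficients $v^2,\lambda v^2$ identify the proportionality factor as $v^2$, so the two middle coefficients vanish:
\[
\lambda u^2+vw+\lambda p=0,\qquad \lambda uv+w^2+\lambda q=0 .
\]
These are precisely the relations met in \cite[Lemma~4.5]{RS}; the whole point of the correction term $-\alpha^2g$ is to reproduce them in characteristic $2$, compensating for the vanishing on $\ell$ of the entry $2g_2$ of the naive Hessian.

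It remains to reach the normal form, which is exactly the locus $u=w=0$, $p=q=0$: there $g_1=vx_1x_2$ supplies $q_2$, $g_3$ supplies $q_4$, and the two relations hold trivially. The lowest and highest $\lambda$-coefficients of the displayed identities force $w_0=0$ and $u_2=0$, which are exactly the two coefficients of $u,w$ not reachable by shears; the four shears $x_1\mapsto x_1+cx_3+dx_4$, $x_2\mapsto x_2+c'x_3+d'x_4$ left over from the first step then clear the remaining $x_1^2$- and $x_2^2$-coefficients — a shear $x_1\mapsto x_1+cx_3$ turns $x_3x_1^3$ into $x_3x_1^3+cx_1^2x_3^2+c^2x_1x_3^3+c^3x_3^4$ in characteristic $2$, trading $u$ against $p$ and the unconstrained $g_3$, and symmetrically for $w$. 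Once $u=w=0$, the identities read $\lambda p=\lambda q=0$, whence $p=q=0$, and $f$ lies in $\mZ$. This bookkeeping is formally identical to the characteristic $\ne 2,3$ elimination of \cite{RS} and is the routine part; the genuine obstacle is the Hessian computation above, where one must retain the $\Z$-lift and the term $\alpha^2g$ until the very end, since the characteristic-$2$ Hessian on its own no longer detects the flexes along $\ell$.
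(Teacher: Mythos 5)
Your proposal is correct and takes essentially the same route as the paper, whose own proof consists of the remark that the argument of \cite[Lemma 4.5]{RS} goes through verbatim once the characteristic-$2$ modified Hessian is used. Your reconstruction --- normalizing $g_0=x_1^3+\lambda x_2^3$ from the $(2,2)$ ramification (the two triple points being distinct by smoothness), deriving the two coefficient identities from $g_0\mid h(x_1,x_2,0)$ (your restricted Hessian formula checks out against the $\Z$-lifted definition), and then eliminating $u,w$ by shears and $p,q$ by the identities --- is exactly that argument spelled out.
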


We will not need the precise location of all singular fibers of $\pi$
in what follows,
but we would like to highlight the ramified singular fibers of Kodaira type $I_1$
at $\lambda = 0, \infty$.
These degenerate to type $I_2$ if and only if $x_3$ resp.~$x_4$ divides $q_4$
(unless the surface attains singularities, for instance if $q_4$ has  a square factor).
Note that if $S$ is smooth and taken as in Lemma \ref{lem:Z}, then
\[
v(\ell) = 18 \;\; \Longleftrightarrow \;\; x_3x_4\nmid q_4.
\]

For the record, we also note the following easy consequence of our considerations
which we will use occasionally to specialise to jacobian elliptic fibrations:

\begin{Corollary}
\label{cor:21}
If $\pi$ admits no section,
or if no two lines on $S$ intersect,
then $S$ contains at most 21 lines.
\end{Corollary}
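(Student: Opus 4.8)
The plan is to translate the two hypotheses into statements about the elliptic fibration $\pi$ attached to a fixed line $\ell$ (recall $\pi$ is genuinely elliptic by Proposition \ref{prop:qe}) and then to invoke the valency bounds already established. The basic dictionary is the following: a line $m\neq\ell$ on $S$ meeting $\ell$ spans together with $\ell$ a plane $H_t$, so $m$ is one of the linear components of the residual cubic $C_t$, i.e.\ a component of a fiber of $\pi$; such lines are exactly the ones counted by the valency $v(\ell)$. On the other hand, a line $m$ disjoint from $\ell$ cannot lie in any $H_t$ (otherwise it would be a component of $C_t$ and would meet $\ell$ inside that plane), so it meets each $H_t$ in a single point of $C_t$ and is therefore a section of $\pi$. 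Thus every line on $S$ is either $\ell$, or one of the $v(\ell)$ fiber-component lines, or a section of $\pi$; and by Corollaries \ref{cor:v18} and \ref{cor:v20} we always have $v(\ell)\le 20$, irrespective of the kind and ramification type of $\ell$.

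With this dictionary the first hypothesis is immediate. If $\pi$ admits no section, then there is no line disjoint from $\ell$, so every line other than $\ell$ is counted by $v(\ell)$. Hence the total number of lines is $1+v(\ell)\le 1+20=21$.

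For the second hypothesis I would argue lattice-theoretically, bypassing $\pi$ altogether. Suppose no two of the $N$ lines $\ell_1,\dots,\ell_N$ on $S$ intersect. Writing $H$ for the hyperplane class, these classes live in $\NS(S)$ with Gram data $H^2=4$, $H\cdot\ell_i=1$, $\ell_i^2=-2$ (adjunction on the $\Kthree$ surface $S$) and $\ell_i\cdot\ell_j=0$ for $i\neq j$. A direct evaluation of this bordered diagonal Gram matrix gives determinant $(-1)^N 2^{N-1}(N+8)$, which never vanishes; hence $H,\ell_1,\dots,\ell_N$ are linearly independent and span a sublattice of $\NS(S)$ of rank $N+1$. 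Since $S$ is a $\Kthree$ surface, the rank of $\NS(S)$ is at most $b_2(S)=22$, and therefore $N+1\le 22$, i.e.\ $N\le 21$.

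The one point requiring care is precisely this rank bound: in characteristic $2$ supersingular $\Kthree$ surfaces attain Picard number $22$ (cf.\ \eqref{eq:Artin}), so the correct constant here is $22$ rather than the characteristic-zero value $20$, and it is exactly this that produces the stated bound of $21$. Everything else is bookkeeping; the only genuinely nontrivial input, namely $v(\ell)\le 20$, is already supplied by Corollaries \ref{cor:v18} and \ref{cor:v20}.
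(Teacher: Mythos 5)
Your proposal is correct, and the first half coincides with the paper's proof: if $\pi$ has no section, every line meets $\ell$ (a disjoint line would be a section), so the count is $1+v(\ell)\le 21$ via Corollaries \ref{cor:v18} and \ref{cor:v20}, exactly as in the paper. In the second case you take a mild variant of the paper's lattice argument. The paper observes that pairwise disjoint lines give pairwise orthogonal $(-2)$-classes, hence a \emph{negative definite} sublattice of $\NS(S)$, and since $\NS(S)$ has signature $(1,\rho-1)$ with $\rho\le b_2(S)=22$, at most $\rho-1\le 21$ such classes fit. You instead adjoin the hyperplane class $H$ and verify linear independence directly: the bordered Gram matrix of $H,\ell_1,\dots,\ell_N$ has determinant $(-2)^N\bigl(4+\tfrac N2\bigr)=(-1)^N2^{N-1}(N+8)\neq 0$, so $N+1\le\rho\le 22$. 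Both devices secure the crucial ``$+1$'' needed to get $21$ rather than $22$ --- the paper via the single positive eigenvalue of the signature, you via explicitly bordering with the positive class $H$; your computation is correct ($\det M=\det(-2I_N)\,(4-\mathbf{1}^{T}(-2I_N)^{-1}\mathbf{1})$ by the Schur complement) and is arguably more elementary, needing only the rank bound $\rho\le 22$ and not the signature of $\NS(S)$, at the cost of a small determinant calculation. You are also right to flag that in characteristic $2$ one must use $\rho\le 22$ (supersingular surfaces attain it) rather than the characteristic-zero bound $20$; the paper's proof makes the same point via $\rho(X)\le b_2(X)=22$.
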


\begin{proof}
In the first case, since any line disjoint from $\ell$ would give a section of $\pi$,
we know that all lines on $S$ meet $\ell$.
Hence the corollary follows from the combination of
Corollaries \ref{cor:v18} and \ref{cor:v20}.

In the second case, the lines give orthogonal $(-2)$ classes in $\NS(X)$.
Since the latter has signature $(1,\rho(X)-1)$ with $\rho(X)\leq b_2(X)=22$,
the claim follows.
\end{proof}

\section{Proof of Theorem \ref{thm} in the triangle case}
\label{s:triangle}

We will break the proof of Theorem \ref{thm} into three cases,
depending on which configurations of lines the smooth quartic $S$ admits.
They will be treated separately in this and the next two sections.
Throughout this section, we work with a smooth quartic $S$ satisfying the following assumption:

\begin{Assumption}
$S$ contains a triangle (or a star) composed of 3 lines.
\end{Assumption}

Equivalently (since $S$ is assumed to be smooth), there is a hyperplane $H$ 
containing the three said lines and thus splitting completely into lines on $S$:
\[
H\cap S = \ell_1+\hdots+\ell_4.
\]

\subsection{}

If neither of the lines $\ell_1,\hdots,\ell_4$ is of the second kind,
then each of them meets at most $18$ lines on $S$ by Corollary \ref{cor:v18}.
Since any line on $S$ meets $H$,
we find that $S$ contains at most 64 lines as claimed.

\subsection{}

If the lines are allowed to be of the second kind,
but not of ramification type $(2,2)$,
then Corollary \ref{cor:v20} implies again
that $S$ contains at most 64 lines. 

\subsection{}

To complete the proof of Theorem \ref{thm} in the triangle case,
it thus suffices to consider the case
where one of the lines, say $\ell_1$,
is of the second kind with ramification type $(2,2)$.
Hence $X$ can be given as in Lemma \ref{lem:Z};
in particular, it admits a symplectic automorphism $\varphi$ of order 3
acting by
\[
\varphi[x_1,x_2,x_3,x_4] \mapsto [\omega x_1,\omega^2 x_2,x_3,x_4]
\]
for some primitive root of unity $\omega$.
Note that $\varphi$ permutes the lines $\ell_2,\ell_3,\ell_4$
(or of any triangle coplanar with $\ell_1$).
In particular, these three lines are of the same type.
As before, we continue to distinguish three cases:

\subsubsection{}
\label{ss:631}

If $\ell_2$ is of the first kind,
then consider the degree 18 polynomial $R\in k[\lambda]$
associated to the flex points of the genus one fibration induced by $\ell_2$.
Locate the singular fiber $\ell_1+\ell_3+\ell_4$ (of type $I_3$ or $IV$) at $\lambda=0$.
Then an explicit computation shows that
\[
\lambda^4\mid R.
\]
Since this divisibility exceeds the lower bound from Lemma \ref{lem:R},
we infer from the arguments laid out in Section \ref{s:RH}
that $\ell_2$ meets at most 14 lines outside the fiber at $\lambda=0$.
In total, this gives
\[
v(\ell_i)\leq 17, \;\;\; i=2,3,4.
\]
Together with Corollary \ref{cor:v20}, this shows that $S$ contains at most 63 lines.

\subsubsection{}

Similarly, if $\ell_2$ is of the second kind, but not of ramification type $(2,2)$,
then by Corollary \ref{cor:v20}, there are no more than 60 lines on $S$.

\subsubsection{}

We conclude the proof of Theorem \ref{thm} in the triangle case
by ruling out that $\ell_2$ is of the second kind with ramification type $(2,2)$.
(Over $\C$ this case leads either to Schur's quartic (containing 64 lines) or
to $S$ being singular (Lemma 6.2 in \cite{RS})),
but presently the situation differs substantially since there can only be two ramification points anyway.
Rescaling $x_3, x_4$, we can assume that the given line lies in the fiber at $\lambda=1$; 
this determines the $x_4^4$-coefficient of $q_4$, say.
Then, up to the action of $\varphi$,
it is given by
\[
\ell_2: \;\;
x_3+x_4=x_1+x_2+q_2(1,1)x_3=0
\]
One checks that $\ell_2$ has generically ramification type $(1,1)$;
this degenerates to type $(2,2)$ with $S$ continuing to be smooth
if and only if
\[
q_2'(1,1)q_2(1,1)^2+q_4'(1,1)=0
\]
where the prime indicates the formal derivative with respect to either $x_3$ or $x_4$.
Substituting for a coefficient of $q_4$,
this directly implies
\[
\lambda^6 \mid R
\]
where we have located the $I_3$ or $IV$ fiber $\ell_1+\ell_3+\ell_4$ at $\lambda=0$
as in \ref{ss:631}.
Solving for $R\equiv 0$, we inspect the first few coefficients of $R$.
Starting with the coefficient of $\lambda^7$,
a simple case-by-case analysis yields that $R$ can only vanish identically if $S$ is singular.
This completes the proof of Theorem \ref{thm} in the triangle case.
\qed

\section{Proof of Theorem \ref{thm} in the square case} \label{s:square}

Throughout this section, we work with a smooth quartic $S$ satisfying the following assumption:

\begin{Assumption}
\label{ass6}
$S$ contains neither a triangle nor a star composed of 3 lines,
but it does contain a square comprising 4 lines.
\end{Assumption}

We shall refer to this situation as the square case.
Our arguments are inspired by an approach due to Degtyarev and Veniani (see \cite{Veniani}).

\begin{Lemma}
\label{lem:v12}
If $S$ contains no triangles or stars,
then each line $\ell$ on $S$ has valency 
\[
v(\ell)\leq 12.
\]
\end{Lemma}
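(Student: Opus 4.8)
The plan is to exploit the genus one fibration $\pi$ induced by a line $\ell$ together with the structural restrictions recorded in the tables of Section~\ref{s:fibr}. Since $S$ contains no triangle and no star, the fibration $\pi$ has \emph{no} fiber containing three concurrent or three coplanar lines; in Kodaira terms this forbids the reducible fibers of types $I_3$ and $IV$ from splitting $\ell$ into a three-line configuration. Concretely, every other line met by $\ell$ lives in a fiber of $\pi$, and the only reducible fibers that may carry such lines (without creating a triangle or star together with $\ell$) are those of types $I_1$, $I_2$, $II$ and $III$, i.e.\ fibers in which $\ell$ meets at most one additional line. The first step is therefore to translate the combinatorial hypothesis ``no triangle, no star'' into the statement that each singular fiber of $\pi$ contributes at most one line to $v(\ell)$, so that $v(\ell)$ is bounded by the number of reducible fibers that $\ell$ can meet in a linear component.

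Next I would split according to whether $\ell$ is of the first or second kind. For a line of the \emph{second kind}, Corollary~\ref{cor:v20} already gives $v(\ell)\le 16$, but under the no-triangle/no-star hypothesis one can do better: consulting Lemma~\ref{lem:ramconfig}, the $(I_1,I_3)$ pairs and the $IV$ degenerations are exactly the fiber configurations that would force a triangle or star, so these are excluded, leaving only the $II$-fibers and the $I_2$ contributions to produce intersections with $\ell$. Counting these under each ramification type should push the bound down to $v(\ell)\le 12$. For a line of the \emph{first kind}, I would return to the resultant polynomial $R\in k[\lambda]$ of degree $18$ from Section~\ref{s:RH}: by Lemma~\ref{lem:R}, each $I_3$ or $IV$ fiber forces a factor $(\lambda-\lambda_0)^3$ and each ramified $I_2$ or $III$ fiber a factor $(\lambda-\lambda_0)^2$. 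Since $I_3$ and $IV$ are now forbidden, every zero of $R$ that corresponds to an honest line comes with multiplicity one (simple fibers $I_1$, $II$) or is governed by the ramification, and the absence of the triple-contributing fibers should cut the effective degree available for lines from $18$ down to $12$.

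The main obstacle will be the bookkeeping on the two \emph{ramified} reducible fiber types, $I_2$ and $III$, which by Lemma~\ref{lem:R}(2) each absorb a double factor of $R$, and the interplay with wild ramification for type $II$ (which, as noted after Lemma~\ref{lem:ram}, imposes a zero of multiplicity at least $4$ on the discriminant). I would need to verify carefully that, once $I_3$ and $IV$ are ruled out, the maximal number of simple linear components $\ell$ can meet across all admissible fibers is exactly $12$ rather than some larger number, and in particular that the ramification points of $\pi|_\ell$ do not allow an extra intersection to sneak in. The cleanest route is probably to treat each ramification type $(1)$, $(1,1)$, $(1,2)$, $(2,2)$ separately, in each case listing the admissible fiber configuration as in Lemma~\ref{lem:ramconfig} but with $I_3$, $IV$ deleted, and reading off the resulting valency; the hypothesis of Assumption~\ref{ass6} guarantees uniformly that no fiber contributes more than one line, so the per-case count collapses to the single bound $v(\ell)\le 12$.
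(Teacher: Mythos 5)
Your first step is exactly right and matches the paper: since every line meeting $\ell$ is coplanar with $\ell$ and hence a component of a fiber of $\pi$, and since the absence of triangles and stars forbids fibers of types $I_3$ and $IV$, only fibers of types $I_2$ and $III$ can carry a line, one line per fiber. But from there your proposal diverges into the first/second-kind dichotomy, and in the first-kind case it has a genuine gap: the resultant $R$ has degree $18$, and a line-containing fiber of type $I_2$ or $III$ \emph{without} double ramification is only guaranteed a \emph{simple} zero of $R$ by the discussion in Section~\ref{s:RH}; Lemma~\ref{lem:R} boosts the multiplicity to $2$ only at doubly ramified places, of which there are at most two. Deleting $I_3$ and $IV$ from the picture does not lower the degree of $R$ nor raise the multiplicities at the remaining zeroes, so this route caps out at $v(\ell)\leq 18$ (or at best $16$ after accounting for the two possible double ramification points), not $12$. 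The phrase ``should cut the effective degree available for lines from $18$ down to $12$'' is precisely where the argument has no mechanism.

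The idea you are missing is the paper's actual proof, which bypasses $R$ entirely: an Euler--Poincar\'e characteristic count. First invoke Proposition~\ref{prop:qe} to know $\pi$ is elliptic (this matters: a quasi-elliptic fibration would have $20$ fibers of type $III$, each containing a line, giving valency $20$ -- your proposal never excludes this, and Lemma~\ref{lem:ramconfig}, which presumes the elliptic/torsion-section setup, does not apply there). Then the local contributions of the singular fibers, including wild ramification, sum to $e(S)=24$; each $I_2$ contributes $2$, and each $III$ contributes at least $4$ in characteristic $2$ (wild ramification is forced, as noted after Lemma~\ref{lem:ram}, citing \cite[Prop.~5.1]{SS}). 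Hence at most $12$ fibers contain a line, and $v(\ell)\leq 12$ follows uniformly, with no case split by kind or ramification type needed. Your second-kind analysis via Lemma~\ref{lem:ramconfig} is salvageable (with $I_3$ and $IV$ excluded, the paired configurations essentially disappear), but it is unnecessary once the Euler characteristic argument is in place, and it cannot rescue the first-kind case.
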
 

\begin{proof}
Since the genus one fibration $\pi$ induced by $\ell$ cannot be quasi-elliptic by Proposition \ref{prop:qe},
the proof of Lemma \ref{lem:v12} amounts to a simple Euler-Poincar\'e characteristic computation
as the contributions of the singular fibers (including wild ramification) have to sum up to $24$.
Presently, since $S$ admits  no triangles and stars by assumption,
$\pi$ can only have singular fibers of Kodaira types $I_1, I_2, II, III$.
Hence there can be at most 12 fibers containing a line.
\end{proof}

Denote any 4 lines forming a square on $S$ by $\ell_1,\hdots,\ell_4$.
Order the lines such that $\ell_i.\ell_j=1$ if and only if $i\not\equiv j\mod 2$.
Consider the two residual (irreducible) conics $Q_{12}, Q_{34}$ such that the hyperplane class $H$ decomposes on $S$ as
\[
H = \ell_1+\ell_2+Q_{12} = \ell_3+\ell_4+Q_{34}.
\]
Then the linear system $|2H-Q_{12}-Q_{34}|$ induces a genus one fibration
\[
\psi: S\to \PP^1
\]
with fibers of degree $4$ -- one of them is exactly the square $D=\ell_1+\hdots+\ell_4$ of Kodaira type $I_4$.
Any line on $S$ is either orthogonal to $D$ and thus a fiber component of $\psi$,
or it gives a section or bisection for $\psi$,
thus contributing to the valency of one or two of the lines $\ell_i$.
In total, this gives the upper bound
\begin{eqnarray*}
\#\{\text{lines on } S\} & \leq & \#\{\text{lines in fibers of } \psi\} + \sum_{i=1}^4 (v(\ell_i)-2)\\
& \leq & \#\{\text{lines in fibers of } \psi\} + 40
\end{eqnarray*}
where the second equality follows from Lemma \ref{lem:v12}.
We shall now study the possible fiber configurations 
to derive the following upper bound for the number of lines on $S$ which will prove Theorem~\ref{thm} in the square case.

\begin{Proposition}
\label{prop:60}
Under Assumption~\ref{ass6},
the smooth quartic $S$ contains at most 60 lines.
\end{Proposition}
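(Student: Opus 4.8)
The plan is to bound the two terms in the displayed inequality separately, namely the number of lines lying in fibers of $\psi$ and the contribution $\sum_i (v(\ell_i)-2) \leq 40$, and to show that they cannot simultaneously be large enough to push the total past $60$. The key structural input is that $\psi$ is a genus one fibration with a fiber of type $I_4$ (the square $D$), and that $S$ contains no triangles or stars, so that \emph{every} singular fiber of $\psi$ is again constrained to the types compatible with Assumption~\ref{ass6}. First I would run an Euler--Poincar\'e characteristic count for $\psi$: the total contribution of singular fibers (including any wild ramification) must sum to $24=e(S)$, and the $I_4$ fiber already accounts for $4$. This leaves at most $20$ worth of Euler characteristic for the remaining fibers. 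Since no triangles or stars occur, the only reducible fibers that can contain lines are of type $I_2$ (and $I_3$, $IV$ are excluded), so a line lying in a fiber must sit in an $I_2$ or another $I_4$; a careful bookkeeping of how many such fiber-components can appear gives an upper bound on $\#\{\text{lines in fibers of }\psi\}$.

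Next I would sharpen the crude bound $v(\ell_i)\leq 12$ from Lemma~\ref{lem:v12} in the situation at hand. The four lines $\ell_1,\dots,\ell_4$ are themselves fiber components of the $I_4$ fiber $D$, and each $\ell_i$ induces its own genus one fibration $\pi_i$ for which the square gives additional structure. The hard part will be to combine the constraints coming from \emph{two different fibrations} simultaneously: the fibration $\psi$ controls the lines not meeting $D$, while each $\pi_i$ controls the valency of $\ell_i$. I expect that one cannot have both the valencies $v(\ell_i)$ close to $12$ \emph{and} many lines inside the fibers of $\psi$, because a line in a fiber of $\psi$ that meets some $\ell_i$ would force a reducible fiber of $\pi_i$ of a type (triangle/star) forbidden by Assumption~\ref{ass6}. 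Making this incompatibility precise --- tracking exactly which lines are counted by which term and ensuring no double counting or undercounting --- is where the real work lies.

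Concretely, the steps I would carry out are: (i) classify the admissible singular fiber configurations of $\psi$ using the Euler number constraint and the exclusion of $I_3,IV$ fibers, obtaining a short list; (ii) for each configuration, bound $\#\{\text{lines in fibers of }\psi\}$ by counting the line-components of the reducible fibers; (iii) independently bound the total valency contribution $\sum_i(v(\ell_i)-2)$, improving on the naive $40$ whenever the fiber structure of $\psi$ is rich (since rich fiber structure eats into the Euler budget that the individual $\pi_i$ would need for high valency); and (iv) check case by case that the sum of the two bounds never exceeds $60$. The delicate point throughout is the interplay between the single fibration $\psi$ and the four fibrations $\pi_i$: I would organize the argument so that the Euler--Poincar\'e budget of $24$ is the common currency, arguing that lines contributing heavily to one count necessarily consume fiber-contributions that suppress the other count.

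I anticipate the main obstacle to be ensuring that the bound on $\#\{\text{lines in fibers of }\psi\}$ is genuinely compatible with the valency bound without double counting the four lines $\ell_1,\dots,\ell_4$ themselves (which are simultaneously fiber components of $D$ and the base lines whose valencies appear in the sum). A clean way to avoid this is to split all lines of $S$ into those orthogonal to $D$ (fiber components of $\psi$, counted by the first term) and those meeting $D$ (sections or bisections, counted by the valency term), exactly as in the displayed inequality, and then to argue that the forbidden triangle/star configurations force the two extremes to be mutually exclusive. Once the admissible fiber lists for $\psi$ are in hand, the final tally should reduce to a finite, if somewhat tedious, case analysis yielding the bound of $60$.
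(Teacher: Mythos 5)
There is a genuine gap, and it sits exactly where the real work of the paper's proof lies. Your Euler--Poincar\'e budget argument by itself only shows that the fibers of $\psi$ contain at most $24$ lines (the configuration $6I_4$ saturates the budget with $24$ line components), and $24+40=64$, not $60$. To get $60$ one must rule out \emph{every} fiber configuration of $\psi$ carrying more than $20$ lines, namely $6I_4$; $5I_4+2I_2$; $5I_4+I_3+I_1$; $5I_4+IV$; $5I_4+I_2+2I_1$; $5I_4+III$; $4I_4+2I_3+I_2$. The paper does this by passing to the Jacobian of $\psi$ and using structural Weierstrass-form arguments specific to characteristic $2$: for the configurations with square $j$-invariant ($6I_4$, $5I_4+2I_2$, $5I_4+IV$, $5I_4+III$) one descends along a purely inseparable base change to a rational elliptic surface with five $I_2$ fibers, which does not exist (Lang), or gets an Euler-number contradiction; for the semi-stable configurations one applies the supersingular-place criterion (Lemma~\ref{lem:ss}, via Tate's algorithm) to show the discriminant would need an extra zero. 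None of this is recoverable from the bookkeeping you propose, and no amount of ``interplay'' between $\psi$ and the $\pi_i$ substitutes for it.

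Two further concrete errors. First, you assert that Assumption~\ref{ass6} excludes fibers of $\psi$ of types $I_3$ and $IV$: it does not. The fibers of $\psi$ have degree $4$, so an $I_3$ or $IV$ fiber consists of \emph{two lines and a conic}, which produces neither a triangle nor a star of three lines; indeed such fibers occur in three of the seven configurations the paper must eliminate. You have confused the fiber table for $\psi$ with that for the cubic fibrations $\pi$. Second, your proposed incompatibility mechanism --- that a line in a fiber of $\psi$ meeting some $\ell_i$ forces a forbidden fiber of $\pi_i$ --- is vacuous: lines in fibers of $\psi$ are orthogonal to $D$ by construction and hence meet no $\ell_i$. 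The two terms in the displayed inequality count disjoint sets of lines, and the paper uses the valency bound $v(\ell_i)\leq 12$ as is, without any sharpening; the entire improvement from $64$ to $60$ comes from cutting the fiber-line count from $24$ down to $20$ by the characteristic-$2$ arguments above.
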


Before starting the proof of Proposition \ref{prop:60} properly,
we note the possible reducible fibers of $\psi$:

\begin{table}[ht!]
\begin{tabular}{c|l}
Kodaira type & configuration of curves\\
\hline
$I_2/III$ & line + cubic or two quadrics\\
$I_3/IV$ & 2 lines and a quadric\\
$I_4$ & 4 lines forming a square
\end{tabular}
\vspace{.3cm}
\label{tab:F}
\caption{Possible reducible fibres of $\psi$}
\end{table}

Using the fact that additive fibers necessarily come with wild ramification
(so that they contribute at least 4 to the Euler-Poincar\'e characteristic, see \cite[Prop.~5.1]{SS}),
one can easily work out all fiber configurations possibly admitting more than 20 lines as fiber components:
$$
\begin{array}{c|c}
\#\{\text{lines in fibers of } \psi\} & \text{fiber configuration}\\
\hline
24 & 6I_4\\
22 & 5I_4 + 2I_2\\
& 5I_4+I_3+I_1\\
& 5I_4+IV\\
21 & 5I_4+I_2+2I_1\\
& 5I_4+III\\
&  4I_4+2I_3+I_2
\end{array}
$$
To rule out all these configurations, 
we will employ structural Weierstrass form arguments specific to characteristic $2$
(which apply since we always switch to the Jacobian of $\psi$).
Similar arguments have been applied to the particular problem of maximal singular fibers of elliptic K3 surfaces
in \cite{SS}.
%

\subsection{General set-up}

In characteristic $2$,
an elliptic curve with given non-zero j-invariant $j$ can be defined 
by a Weierstrass form over a given field $K$ by
\begin{eqnarray}
\label{eq:WF}
y^2 + xy = x^3 + \frac 1j.
\end{eqnarray}
As usual, this is unique up to quadratic twist, 
but here twists occur in terms of an extra summand $Dx^2$,
with $K$-isomorphic surfaces connected by the Artin-Schreier map $z\mapsto z^2+z$ over $K$:
\begin{eqnarray}
\label{eq:D}
y^2 + xy = x^3 + Dx^2 + \frac 1j.
\end{eqnarray}
The main approach now consists in substituting a conjectural j-invariant,
given as quotient
\begin{eqnarray}
\label{eq:j}
j = a_1^{12}/\Delta
\end{eqnarray}
associated to the usual integral Weierstrass form
\[
y^2 + a_1xy + a_3 y = x^3 + a_2x^2 + a_4x+a_6.
\]
Converting the twisted form of \eqref{eq:WF} with j-invariant from \eqref{eq:j}
to an integral model $E$,
we arrive at the Weierstrass form
\begin{eqnarray}
\label{eq:E}
y^2 + a_1^2xy = x^3 +D'x^2 + \Delta
\end{eqnarray}
which outside very special cases will be non-minimal at the zeroes of $a_1$.
Then minimalizing is achieved by running Tate's algorithm \cite{Tate}
which consequently gives relations between the coefficients of $a_1, D'$ and $\Delta$,
or in some cases like ours leads to a contradiction.
By inspection of \eqref{eq:E}, the polynomial $a_1$ encodes singular or supersingular fibers.
For immediate use, we record the 
following criterion which is borrowed from \cite[Lem. 2.4 (a)]{Schweizer}:

\begin{Lemma}
\label{lem:ss}
Assume that there is a supersingular place which is not singular.
Locating it at $t=0$, the $t$-coefficient of $\Delta$ has to vanish.
\end{Lemma}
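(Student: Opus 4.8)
The plan is to read off the claim from one step of Tate's algorithm \cite{Tate} applied to the model \eqref{eq:E}. Since in characteristic $2$ a supersingular elliptic curve has $j$-invariant $0$, locating the supersingular place at $t=0$ and invoking \eqref{eq:j} gives $a_1(0)=0$, so $t\mid a_1$. The discriminant of \eqref{eq:E} equals $a_1^{12}\Delta$, which therefore has valuation at least $12$ at $t=0$; in particular \eqref{eq:E} looks like a bad fibre there. As the place is assumed non-singular, the Jacobian has good reduction, so the only way out is that \eqref{eq:E} is \emph{non-minimal} at $t=0$. Concretely, there must exist $r,s,w\in k[[t]]$ such that the admissible transformation $(x,y)\mapsto(t^2x+r,\,t^3y+t^2sx+w)$ turns \eqref{eq:E} into an integral Weierstrass equation.

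First I would write out the transformed coefficients in characteristic $2$. The coefficient $A_4'$ satisfies $t^4A_4'=(w+rs)a_1^2+r^2$; reducing modulo $t$ and using $t\mid a_1$ forces $r(0)^2=0$, hence $t\mid r$. Next, the coefficient $A_6'$ satisfies
\[
t^6A_6'=\Delta+r^2D'+r^3+w^2+rw\,a_1^2 ,
\]
and integrality demands that the right-hand side be divisible by $t^6$, in particular by $t$. Now $w^2$ is a square and so contains only even powers of $t$, while $r^2D'$, $r^3$ and $rw\,a_1^2$ all have valuation $\geq 2$ because $t\mid r$ and $t\mid a_1$. Hence the coefficient of $t^1$ on the right-hand side is exactly the $t$-coefficient of $\Delta$, which must therefore vanish. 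This is the assertion.

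The step I expect to be the crux is the clean isolation of the $t^1$-term, which hinges entirely on $t\mid r$: without it the summands $r^2D'$ and $r^3$ could a priori contribute odd powers and spoil the conclusion. I would secure $t\mid r$ from the $A_4'$-computation above (equivalently, from the fact that Tate's algorithm first translates the reduced singular point, which here sits at $x=0$, to the origin). The remaining integrality conditions on $A_1',A_2',A_3'$ only constrain $s,w$ and the higher coefficients of $r$, and do not interfere with the vanishing of the $t^1$-coefficient; I would check in passing that they can indeed be met, paying attention to the low-order-of-vanishing cases of $a_1$.
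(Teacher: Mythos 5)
Your proposal is correct and takes essentially the same approach as the paper: both arguments exploit that good reduction at the supersingular place (where $t\mid a_1$) forces the integral model \eqref{eq:E} to be non-minimal at $t=0$, and then read off $d_1=0$ from the first obstruction in Tate's algorithm. The paper packages that last step more succinctly --- non-minimality means the singular point $(0,\sqrt{d_0})$ of the Weierstrass fiber is a surface singularity, and Jacobi's criterion (vanishing of the $t$-derivative there) is precisely $d_1=0$ --- while you verify the identical condition by explicit transformation bookkeeping, with the key points ($u=t$ suffices, and $t\mid r$ from the $A_4'$-relation) correctly secured.
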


\begin{proof}
By assumption, $a_1=ta_1'$, so the integral Weierstrass form \eqref{eq:E} reads
\[
y^2 + t^2a_1'^2xy = x^3 +D'x^2 + \Delta.
\]
Writing $\Delta=d_0+d_1t+\hdots$,
the fiber of the affine Weierstrass form at $t=0$ has a singular point at $(0,\sqrt{d_0})$.
Since $t=0$ is a place of good reduction, the Weierstrass form is non-minimal.
From Tate's algorithm \cite{Tate},
this translates as $(0,\sqrt{d_0})$ being in fact a surface singularity.
Equivalently, $d_1=0$ by Jacobi's criterion.
\end{proof}

\begin{Example}
\label{ex:21}
There cannot be a rational elliptic surface in characteristic $2$
with singular fibers $5 I_2, 2 I_1$.
Otherwise, $\Delta=\Delta_5^2(t^2+at+b)$ for some squarefree degree 5 polynomial $\Delta_5$ and with $a\neq 0$.
Since the surface is semi-stable, Lemma \ref{lem:ss} kicks in to show that $a=0$, contradiction.
\end{Example}

\begin{Remark}
Note that the criterion of Lemma \ref{lem:ss}
applies after any M\"obius transformation fixing $0$,
and to any supersingular place that is not singular.
Tracing the non-minimality argument further through Tate's algorithm,
one can, for instance, show that there do not exist elliptic fibrations in characteristic $2$
with configuration of singular fibers $4I_3 + 6I_2$
(as occuring on Schur's quartic over $\C$).
\end{Remark}

\subsection{Non-existence of the configurations $6I_4, 5I_4+2I_2, 5I_4+IV, 5I_4+III$}
\label{ss:72half}

In each of the said cases,
the j-invariant \eqref{eq:j} is a perfect square.
Equivalently, $\psi$ arises from another elliptic fibration by a purely inseparable base change.
(To see this, apply the variable transformation $y\mapsto y+Dx$ to \eqref{eq:D}.)
In the first two cases, this would lead to a rational elliptic surface with five $I_2$ fibers;
this cannot exist by \cite{Lang} (which can be checked independently as in Example \ref{ex:21} or \ref{ss:73}).
Similarly, the configuration $5I_4+IV$ cannot arise at all
because fibers of type $IV$ are only related to type $IV^*$ by inseparable base change,
so that the Euler-Poincar\'e characteristics would not sum up to a multiple of $12$.

For the last configuration, since the fiber of type $III$ comes with wild ramification of index $1$
(by the Euler-Poincar\'e characteristic formula), it can only arise from a singular fiber
of the same ramification index and total contribution to the discriminant congruent to $2$ mod $6$.
By \cite[Prop.~5.1]{SS}, this uniquely leads to Kodaira type $I_1^*$,
but then again with the five $I_2$ fibers the Euler-Poincar\'e characteristics do not sum up to a multiple of $12$.

\subsection{Non-existence of the configurations $5I_4+I_3+I_1, 5I_4+I_2+2I_1, 4I_4+2I_3+I_2$}
\label{ss:73}

Each of these configurations is semi-stable, so Lemma \ref{lem:ss} applies
with supersingular (smooth) place at $t=0$.
In the first case, for instance, we can locate the $I_3$ fiber at $\infty$
and write affinely
\[
\Delta = \Delta_5^4(t-\lambda)
\]
for some squarefree polynomial $\Delta$ of degree $5$.
But then $t\mid \Delta_5$ by Lemma \ref{lem:ss}, contradiction.
The other two configurations can be ruled out completely analogously.

\noindent
{\sl Proof of Proposition~\ref{prop:60}:}
Altogether it follows from $\S$~\ref{ss:72half} and $\S$~\ref{ss:73} that the fibers of $\psi$ may contain at most 20 lines.
In view of the upper bound
\[
\#\{\text{lines on } S\} \leq \#\{\text{lines in fibers of } \psi\} + 40,
\]
this completes the proof of Proposition~\ref{prop:60}.
\qed

\section{Proof of Theorem \ref{thm} in the squarefree case}
\label{s:sqfree}

Throughout this section, we work with a smooth quartic $S$ satisfying the following assumption:

\begin{Assumption}
\label{ass7}
$S$ contains neither a triangle nor a star composed of 3 lines nor a square comprising 4 lines.
\end{Assumption}

In short, we will also call $S$ squarefree (meaning also trianglefree).
%
%
In the sequel we will use the following stronger version of \cite[Thm~1.1]{RS14}.
\begin{Lemma} \label{lemma-conic}
Let  $Q \subset S$ be an  irreducible conic which is coplanar with two lines on $S$.
Then $Q$ is met by at most $46$ lines on $S$. 
\end{Lemma}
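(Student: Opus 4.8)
The plan is to exploit the conic $Q$ in much the same way that the preceding sections exploit a line: namely, to build an auxiliary genus one fibration for which $Q$ plays a distinguished role, and then to bound the number of lines by counting fiber components together with sections and bisections. Concretely, let $\ell_1, \ell_2$ be the two lines coplanar with $Q$, so that on $S$ one has a hyperplane decomposition $H = \ell_1 + \ell_2 + Q$. The first step is to produce a suitable linear system involving $Q$ whose members are curves of arithmetic genus one; the natural candidate is a system like $|2H - Q|$ or $|3H - 2Q|$ (whichever yields fibers of the right degree with $Q$ as a distinguished component), inducing a fibration $\psi\colon S \to \PP^1$. Each line on $S$ then falls into one of two classes: those orthogonal to the class $[Q]$, which appear as fiber components of $\psi$, and those which are multisections of bounded multiplicity, contributing to only a few fibers each.

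First I would pin down the fibration and verify that $Q$ meets a controlled number of lines \emph{within} a single fiber, and that every other line meeting $Q$ does so as a section or low-degree multisection. The bookkeeping then splits the $46$ bound into a part coming from fiber components and a part coming from multisections. For the multisection contribution I expect to invoke the valency bounds already in hand: Corollary \ref{cor:v18} and Corollary \ref{cor:v20} bound how many lines any line on $S$ meets, and since $S$ is squarefree by the ambient assumption, Lemma \ref{lem:v12} sharpens this to $v(\ell) \le 12$ for every line $\ell$. This is the key leverage: because no triangles, stars, or squares occur, the fibers of $\psi$ are severely restricted (only types $I_1, I_2, II, III$ can carry lines, and reducible fibers carrying lines are scarce), so the number of lines appearing as fiber components is small, and the Euler--Poincar\'e characteristic constraint $\sum_v e_v = 24$ caps the count.

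The technical heart will be an Euler--Poincar\'e characteristic computation for $\psi$, combined with the structural Weierstrass-form arguments specific to characteristic $2$ developed in Section~\ref{s:square} (Lemma \ref{lem:ss} and its refinements). Just as in the square case, one lists the admissible fiber configurations, uses the fact that additive fibers in characteristic $2$ force wild ramification contributing at least $4$ to the discriminant, and rules out the configurations that would allow too many lines as fiber components. Passing to the Jacobian of $\psi$ whenever necessary, one applies the supersingular-place criterion of Lemma \ref{lem:ss} to eliminate borderline cases. Adding the fiber-component count to the multisection count, each weighted by the sharpened valency bound, should yield the desired $46$.

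The hard part will be setting up the correct fibration and correctly accounting for the way $Q$, being a conic rather than a line, interacts with the fibers: unlike a line, $Q$ can meet a fiber in two points, so the multisection/section dichotomy is subtler and the contribution of lines that are tangent to or secant with $Q$ must be tracked carefully to avoid double-counting. I also anticipate that the refined count cannot be purely numerical: as in \ref{ss:631}, one may need an explicit divisibility computation for the resultant $R$ (forcing, say, $\lambda^4 \mid R$ at a coplanar configuration) to squeeze out the last few lines and improve the naive bound down to exactly $46$. That interplay between the geometric fiber count and the characteristic-$2$ Weierstrass obstructions is where the argument will demand the most care.
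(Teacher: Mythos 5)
Your proposal has a genuine gap, in fact two. First, the numerics: with $H^2=4$, $H.Q=2$ and $Q^2=-2$, neither of your candidate linear systems is a genus one pencil, since $(2H-Q)^2=6$ and $(3H-2Q)^2=4$. The only genus one fibration naturally attached to the configuration $H=\ell_1+\ell_2+Q$ is the one the paper already works with, namely $|H-\ell_1|$ (or $|H-\ell_2|$), i.e.\ the residual cubic pencil $\pi$ of Section~\ref{s:fibr}; there $Q$ appears as a component of the single fiber $\ell_2+Q$ of type $I_2$ or $III$, not as a curve interacting with many fibers. Second, and more seriously, the bookkeeping you model on the square case does not transfer. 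In Section~\ref{s:square} the distinguished fiber $D=\ell_1+\cdots+\ell_4$ consists entirely of lines, so every line of $S$ that is not a fiber component meets some $\ell_i$ and is absorbed into $\sum_i\left(v(\ell_i)-2\right)$. Here, by contrast, a line can meet the plane of $Q$ in a point of $Q$ alone, meeting neither $\ell_1$ nor $\ell_2$; such lines are sections or low-degree multisections of $\pi$ passing through the conic component, and nothing you invoke --- not Corollary~\ref{cor:v18}, Corollary~\ref{cor:v20}, nor Lemma~\ref{lem:v12} --- bounds the number of lines through a fixed \emph{conic}. That bound is precisely the content of the lemma, so at the decisive step your argument is circular: you reduce ``at most 46 lines meet $Q$'' to a count of lines through $Q$ that you never perform, and the constant $46$ is never derived (``should yield the desired 46'' is the conclusion restated, not obtained). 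The anticipated resultant divisibilities such as $\lambda^4\mid R$ from \ref{ss:631} concern lines of the first kind meeting a totally split plane and have no evident bearing on a conic.

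A smaller but real defect: you use Lemma~\ref{lem:v12} via ``$S$ is squarefree by the ambient assumption,'' but Lemma~\ref{lemma-conic} is stated \emph{unconditionally} --- it carries no triangle-free or square-free hypothesis --- so even if your plan were completed it would prove a weaker, conditional statement than the one in the paper. For comparison: the paper does not reprove the lemma at all; its proof is a citation, the claim being exactly the last section of the proof of \cite[Prop.~1.3]{RS14}, where the count for a conic coplanar with two lines is carried out directly. A self-contained argument would be welcome, but the present proposal is a plan whose central counting step is missing and whose auxiliary fibrations do not exist.
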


\begin{proof}
The proof of the claim forms the last section of the proof of \cite[Prop.~1.3]{RS14}.  
\end{proof}

\begin{Proposition} \label{lemma-squarfree}
If $S$ is squarefree, then $S$ contains at most $60$ lines.
\end{Proposition}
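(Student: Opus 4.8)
The plan is to exploit the absence of any triangle, star, or square to get strong structural control on how lines meet one another, and then to run a counting argument organized around a single line of high valency. First I would invoke Lemma~\ref{lem:v12}: since $S$ contains no triangle or star, every line $\ell$ on $S$ has valency $v(\ell)\le 12$. If in fact every line had valency at most some small bound, a crude count would already finish the proof, so the real work concerns lines that actually attain valency close to $12$. Fix a line $\ell_0$ with maximal valency $v:=v(\ell_0)$, and consider the genus one fibration $\pi$ induced by $\ell_0$ as in Section~\ref{s:fibr}; by Proposition~\ref{prop:qe} it is genuinely elliptic, and by Lemma~\ref{lem:v12} its singular fibres containing lines are of types $I_1, I_2, II, III$ only (no $I_3, IV$, as those would force a triangle or star). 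Each of the $v$ lines meeting $\ell_0$ sits in a fibre of $\pi$, and every other line on $S$ either meets $\ell_0$ or is disjoint from it, in which case it is a section or multisection of $\pi$.

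The key step is to bound the number of lines disjoint from $\ell_0$, i.e.\ the sections and bisections of $\pi$. Here is where I expect the conic lemma to enter decisively. In the squarefree setting, a line $\ell_i$ meeting $\ell_0$ lies in a plane $H_i$ with $H_i\cap S=\ell_0+\ell_i+Q_i$, where $Q_i$ is an \emph{irreducible} conic coplanar with the two lines $\ell_0,\ell_i$ (irreducibility is forced, since a reducible residual conic would produce a triangle or square). Lemma~\ref{lemma-conic} then bounds the number of lines meeting each such $Q_i$ by $46$. The plan is to choose a favourable pair of incident lines $\ell_0,\ell_i$, apply Lemma~\ref{lemma-conic} to the associated conic $Q_i$, and observe that \emph{every} line on $S$ other than $\ell_0,\ell_i$ themselves must meet the hyperplane $H_i$, hence must meet one of $\ell_0$, $\ell_i$, or $Q_i$. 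Counting: the lines meeting $\ell_0$ number at most $12$, those meeting $\ell_i$ at most $12$, and those meeting $Q_i$ at most $46$, so
\[
\#\{\text{lines on }S\} \le 2 + 12 + 12 + 46,
\]
which overshoots $60$ and must be sharpened. The refinement I would pursue is to account for overlaps (lines meeting two of the three curves are double-counted) and, crucially, to note that $\ell_0,\ell_i$ themselves meet $Q_i$, so the three valency-type bounds are not independent.

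The main obstacle will be tightening this overcount from the naive $72$ down to $60$. I expect the decisive input to be that a line meeting $\ell_0$ and a line meeting $\ell_i$ often coincide or are forced to also meet $Q_i$, together with the fact that in the squarefree case the fibration $\pi$ has no $I_3$ or $IV$ fibres, which limits how lines can concentrate. Concretely, I would re-examine the fibrewise structure: a section or bisection of $\pi$ that is a line meets $\ell_0$ in $\{0,1\}$ points and meets $Q_i$ according to its intersection with the fibre $H_i\cap S$; controlling these incidences should let me subtract the double-counted lines systematically. An alternative route, should the conic bound prove too lossy, is a lattice-theoretic estimate in $\NS(S)$ in the spirit of the second case of Corollary~\ref{cor:21}, using that in the squarefree case the incidence graph of lines has bounded local structure (no short cycles of length $3$ or $4$); combined with the rank bound $\rho(S)\le 22$, this constrains the total number of lines. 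I would deploy whichever of these yields the clean bound $60$, with the conic-counting argument as the primary line of attack and the lattice estimate as backup.
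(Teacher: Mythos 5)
Your opening moves coincide with the paper's: reduce via Corollary~\ref{cor:21} to the case of two incident lines, use Lemma~\ref{lem:v12}, and count lines through the plane section $\ell_0+\ell_i+Q_i$ with the conic bound of Lemma~\ref{lemma-conic}. But your proof stalls exactly where you admit it does. The sharp form of the count is $\#\{\text{lines}\}\le 2+(k_1-1)+(k_i-1)+44=k_1+k_i+44$, where $k_1=v(\ell_0)$, $k_i=v(\ell_i)$ (squarefreeness makes the sets of lines through $\ell_0$ and through $\ell_i$ disjoint, and $\ell_0,\ell_i$ are among the $46$ lines meeting $Q_i$). With $v\le 12$ for both lines this gives only $68$, and no overlap bookkeeping can improve it: a line may meet exactly one of the three curves, so there is no forced double counting to subtract. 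Moreover, fixing $\ell_0$ of \emph{maximal} valency is the wrong selection principle --- the conic count yields $\le 60$ precisely when one can find an incident pair with $k_1+k_i\le 16$, so one should look for a pair of \emph{small} total valency, and the real content of the proposition is what to do when no such pair exists. Your two suggested repairs (subtracting double-counted lines; a lattice estimate from $\rho\le 22$) are not arguments but hopes, and neither has a mechanism that produces the number $60$.

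The missing idea is the paper's dichotomy and the entirely different argument in the second branch. Suppose every incident pair satisfies $k_1+k_i\ge 17$; normalizing $k_1\le k_i$ and using $v\le 12$ one gets $k_1\ge 5$ and $k_i\ge 9$ for all $i\ge 2$. Squarefreeness forces the lines through any fixed line to be pairwise disjoint, and any two of them can share at most one common neighbour; hence $D=2\ell_2+\ell_{2,2}+\cdots+\ell_{2,5}$ is a divisor of Kodaira type $I_0^*$ inducing a genus one fibration $\psi$ with $D$ as a fibre. Each line $\ell_j$ ($3\le j\le k_1+1$) is orthogonal to $D$, and at least $k_j-5\ge 4$ of its neighbours are too (each of the four components $\ell_{2,n}$ can meet at most one neighbour of $\ell_j$, or a square appears), so $2\ell_j+\ell_{j,6}+\cdots+\ell_{j,9}$ is another $I_0^*$ fibre; degree considerations then force $k_j=9$, whence $k_1\ge 8$ and $\psi$ has at least $8$ fibres of type $I_0^*$. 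Their Euler--Poincar\'e contributions total at least $48>24=e(S)$, a contradiction. This construction of many $I_0^*$ fibres and the resulting Euler-characteristic contradiction is the half of the proof your proposal lacks, so as written the argument does not go through.
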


\begin{proof}
By Corollary \ref{cor:21} we can assume that $S$ contains a pair of lines $\ell_1 \neq \ell_2$ that intersect.
Let $\ell_2, \ell_3, \ell_4, \ldots, \ell_{k_1+1}$ be  the lines on $S$ that meet $\ell_1$, and let $\ell_{i,1}, \ldots, \ell_{i,k_{i}}$ be the lines 
that intersect $\ell_i$ for $i \geq 2$.
After reordering the lines, we assume $\ell_1= \ell_{i,1}$. 

Suppose that $(k_1+k_i) \leq 16$ for an $i \in \{2, \ldots, k_1+1\}$. 
Then, by Lemma~\ref{lemma-conic} the irreducible conic in $|{\mathcal O}_{S}(1)-\ell_1-\ell_i|$ is met by at most $44$ lines on $S$
other than $\ell_1, \ell_i$,
and the claim of the proposition follows immediately, because every line on $S$ meets  either $\ell_1$ or $\ell_i$ or the conic. 

Otherwise, we assume to the contrary that $S$ contains more than 60 lines.
After exchanging $\ell_1, \ell_i$, if necessary,
and iterating the above process, we may assume using Lemma \ref{lem:v12} that
\begin{eqnarray}
\label{eq:k's}
5 \leq k_1 \leq k_i   \; \mbox{ and } \; (k_1+k_i) \geq 17  \;  \mbox{ for } i \geq 2 \, . 
\end{eqnarray}
In particular, we always have the lines $\ell_2,\hdots,\ell_6$ on $S$,
and $k_i\geq 9$ for $i\geq 2$.
Assumption \ref{ass7} guarantees that
\begin{eqnarray} \label{eq-firstcondition}
&& \ell_{j_1}.\ell_{j_2} = 0 \mbox{ whenever } 2 \leq j_1 < j_2 \nonumber \\
&& \ell_{i,j_1}.\ell_{i,j_2} = 0 \mbox{ whenever } 1 \leq j_1 < j_2 \mbox{ and } i \geq 2 \nonumber \\
&& \ell_{i,j} \text{ meets at most one of the } \ell_{m,n} \;\;\forall i,j,m\geq 2. 
\end{eqnarray}
Consider the divisor $D=2\ell_2+\ell_{2,2}+\hdots+\ell_{2,5}$ of Kodaira type $I_0^*$.
Then $|D|$ induces an genus one fibration fibration 
\[
\psi: S \to \PP^1
\]
with $D$ as fiber.
Naturally, $\ell_3,\hdots,\ell_{k_1+1}$, being perpendicular to $D$, also appear as fiber components,
but we can say a little more.
Namely, by \eqref{eq-firstcondition}, each $\ell_j$ (for $2<j\leq k_1+1$) comes with at least $(k_j-5)$ adjacent lines,
say $\ell_{j,6},\hdots,\ell_{j,k_j}$ which are also perpendicular to $D$.
Since $k_j\geq 9$, the divisor $2\ell_j+\ell_{j,6}+\hdots+\ell_{j,9}$ gives another $I_0^*$ fiber of $\psi$;
in particular, there is no space for any further fiber components, i.e.~$k_j=9$ for all $j=3,\hdots,k_1+1$.
Therefore we obtain $k_1\geq 8$ from \eqref{eq:k's},
so $\psi$ admits at least $8$  fibers of type $I_0^*$.
The sum of their contributions to the Euler-Poincar\'e characteristic of $S$ clearly exceeds $e(S)=24$,
even if $\psi$ were to be quasi-elliptic.
This gives the desired contradiction and thus concludes the proof of Proposition~\ref{lemma-squarfree}.
\end{proof}

\begin{Conclusion} 
We treated the triangle case (see Section~\ref{s:triangle}), the square case (Prop.~\ref{prop:60}) and the squarefree case 
(Prop.~\ref{lemma-squarfree}),
so the proof of Theorem~\ref{thm} is complete. \qed
\end{Conclusion}

\section{Example with 60 lines}
\label{s:ex}

This section gives an explicit example 
of a smooth quartic surface defined over the finite field $\F_4$
which contains 60 lines over $\F_{16}$.
Motivated by \cite{RS-Barth}
where a pencil of quintic surfaces going back to Barth was studied,
we consider geometrically  irreducible quartic surfaces with an
 action by the symmetric group $S_5$ of order $120$.
These come in a one-dimensional pencil
which can be expressed in elementary symmetric polynomials $s_i$ of degree $i$
in the homogeneous coordinates  of $\PP^4$ as
\[
S_\mu = \{s_1 = s_4 + \mu s_2^2 = 0 \}\subset\PP^4.
\]
There are 60 lines at the primitive third roots of unity as follows.
Let $\alpha\in\F_{16}$ be a fifth root of unity,
i.e. $\alpha^4+\alpha^3+\alpha^2+\alpha+1=0$.
Then $\mu_0=1+\alpha^2+\alpha^3$ is a cube root of unity,
and $S_{\mu_0}$ contains the line $\ell$ given by
\[
\ell=
\{s_1=x_3+x_2+(\alpha^3+\alpha+1)x_1=x_4+(\alpha^3+\alpha^2+\alpha+1)x_2+\alpha x_1=0\}.
\]
The $S_5$-orbit of $\ell$ consists of exactly 60 lines
which span $\NS(S_{\mu_0})$ of rank $20$ and discriminant $-55$.
We are not aware of any other smooth quartic surface in $\PP^2_{\bar\F_2}$ with 60 or more lines.

In fact, the surface $S_{\mu_0}$ can be shown to lift to characteristic zero 
together with all its 60 lines at $\hat\mu=-\frac 3{10} \pm\frac{\sqrt{-11}}{10}$
(although proving this is not as easy as it might seem).
The lines are then defined over the Hilbert class field $H(-55)$
as follows. Let $\alpha\in \bar\Q$ satisfy
\[
\alpha^4+18\alpha^2+125=0 \;\;\; \text{ so that } \;\;\; \hat\mu=\dfrac{\alpha^2+3}{20}.
\]
The quartic at $\hat\mu$ contains the line
\[
\ell=
\{s_1=x_3-ax_1-bx_2 = x_4-cx_1+x_2=0\}
\]
where
\[
2c+\alpha+1 = 
40b+\alpha^3-5\alpha^2+3\alpha-55=
  40a-\alpha^3-5\alpha^2-23\alpha-35=0.
\]
As before,  its $S_5$-orbit comprises all 60 lines on $S_{\hat\mu}$.

\subsection*{Acknowledgements}

We thank Davide Veniani for helpful comments and discussions,
and the anonymous referee for  valuable suggestions.
This paper was completed at the workshop on K3 surfaces and related topics at KIAS in November 2015.
We thank the organizers JongHae Keum and Shigeyuki Kond\=o for the great hospitality.

%
%
%

\end{document}